\documentclass[reqno,11pt,a4paper]{amsart}

\usepackage{amsfonts,amsmath,amssymb,amsthm,color}

\usepackage{amssymb}
\usepackage{amsfonts}
\usepackage{amsmath}
\usepackage{amsthm}
\usepackage{nccmath}
 \usepackage{mathrsfs}
\usepackage[USenglish]{babel}
\usepackage{esint}
\usepackage{graphicx}
\usepackage[colorlinks=true]{hyperref}
\usepackage[latin1]{inputenc}
\usepackage[hmargin=3cm, vmargin=2.9cm]{geometry}
\hypersetup{
    colorlinks,
    citecolor=red,
    linkcolor=blue,
    urlcolor=black
}

\def\R {\mathbb{R}}

\def\N {\mathbb{N}}

\def\d{\delta}
\def\x{\xi}

\newcommand{\pa}{\partial}
\newcommand{\bs}[1]{\boldsymbol{#1}}
\newcommand{\mf}[1]{\mathbf{#1}}



\newtheorem{proposition}{Proposition}[section]
\newtheorem{theorem}[proposition]{Theorem}
\newtheorem*{theorem*}{Theorem}

\newtheorem{lemma}[proposition]{Lemma}

\theoremstyle{definition}
\newtheorem{remark}[proposition]{Remark}
\numberwithin{equation}{section}

\begin{document}
\title[A cubic Schr\"odinger system with mixed attractive and repulsive forces]{Solutions to a cubic Schr\"odinger system with mixed attractive and repulsive forces in a critical regime}

\author{Simone Dovetta}
\address[S. Dovetta]{Dipartimento di Scienze di Base e Applicate per l'Ingegneria, Sapienza Universit\`a di Roma, Via Scarpa 16, 00161 Roma, Italy}
\email{simone.dovetta@uniroma1.it}
 
\author{Angela Pistoia}
\address[A. Pistoia]{Dipartimento di Scienze di Base e Applicate per l'Ingegneria, Sapienza Universit\`a di Roma, Via Scarpa 16, 00161 Roma, Italy}
\email{angela.pistoia@uniroma1.it}

\begin{abstract}
We study the existence of solutions to the cubic Schr\"odinger system
$$
-\Delta u_i =  \sum_{j =1}^m \beta_{ij} u_j^2u_i  + \lambda_i u_i\ \hbox{in}\ \Omega,\ u_i=0\ \hbox{on}\ \partial\Omega,\ i =1,\dots,m,
$$
when $\Omega$ is a bounded domain in $\mathbb R^4, $ $\lambda_i$ are positive small numbers, $\beta_{ij}$ are real numbers so that $\beta_{ii}>0$ and $\beta_{ij}=\beta_{ji}$, $i\neq j$. We assemble the components $u_i$ in groups so that all the interaction forces $\beta_{ij}$ among components of the same group are attractive, i.e. $\beta_{ij}>0$, while forces among components of different groups are repulsive or weakly attractive, i.e. $\beta_{ij}<\overline\beta$ for some $\overline\beta$ small.
We   find solutions  such that
  each component within a given group  blows-up around the same point  and the different groups blow-up around different points, as all the parameters $\lambda_i$'s approach zero.

\end{abstract}

\date\today
\subjclass[2010]{35A15 (primary),  35J20, 35J47 (secondary)}
\keywords{Cubic Schr\"odinger system, attractive and repulsive forces, blow--up phenomenon, Ljapunov--Schmidt reduction}
 \thanks{  }

\maketitle

\section{Introduction}
The study of solitary waves $\Phi_i=\mathtt{exp}(\iota \omega_i t)u_i$ of the nonlinear Schr\"odinger system
$$-\iota\partial_t\Phi_i=\Delta \Phi_i+\Phi_i\sum\limits_{j=1}^m\beta_{ij}|\Phi_j|^2,\ \Phi_i: \Omega\to\mathbb C,\ i=1,\dots,m,$$
where $\Omega$ is a   smooth domain in $\mathbb R^N$ naturally leads to study 
the elliptic system
\begin{equation}\label{soli}-\Delta u_i +\omega_i u_i=  \sum_{j =1}^m \beta_{ij} u_j^2u_i ,\ u_i: \Omega\to\mathbb R,\ i=1,\dots,m.\end{equation}
Here $\omega_i$ and $\beta_{ij}=\beta_{ji}$ are real numbers and   $\beta_{ii}>0.$
This type of systems arises in many physical models such as incoherent wave packets in Kerr medium in
nonlinear optics (see \cite{AkAn}) and in Bose--Einstein condensates for multi--species condensates (see \cite{Timm}).
The coefficient  $\beta_{ij}$  represents the interaction force between components $u_i$ and $u_j$. The sign of $\beta_{ij}$ determines
whether the interactions between components are {\em repulsive} (or {\em competitive}), i.e. $\beta_{ij}<0$, or {\em attractive} (or {\em cooperative}), i.e. $\beta_{ij}>0$. 
In particular, one usually assumes $\beta_{ii}>0.$ We observe that system \eqref{soli} has always the trivial solution, namely when all the components  vanish. If one or more components are identically zero, then system \eqref{soli} reduces to a system with a smaller number of components. Therefore, we are interested in finding solutions whose all components are not trivial. These are called 
{\em fully nontrivial} solutions.
\\

In low dimensions  $1\le N\le 4$, problem \eqref{soli} has a variational structure: solutions to \eqref{soli} are critical points of the energy $J:H\to\mathbb R$
defined by
$$J(u):=\frac12\sum\limits_{i=1}^m\int\limits_\Omega\left(|\nabla u_i|^2+\omega_iu_i^2\right)-\frac14\sum\limits_{i,j=1}^m\beta_{ij}\int\limits_\Omega u_i^2u_j^2,$$
where the space $ H$ is either $H^1(\Omega)$ or $H^1_0(\Omega),$ depending on the boundary conditions associated to $u_i$ in \eqref{soli} in the case of not empty $\partial\Omega$.
Therefore, the existence and multiplicity of solutions can be obtained using classical methods in critical point theory. 
However, there is an important difference between the dimensions $1\le N\le 3$ and the dimension $N=4$. Actually, in dimension $N=4$ the nonlinear part of $J$ has a {\em critical} growth and the lack of compactness of the Sobolev embedding $H^1(\Omega)\hookrightarrow L^4(\Omega)$ makes  difficult the search for critical points.
On the other hand, in dimensions $1\le N\le 3$ the problem has a {\em subcritical} regime and the variational tools can be successfully applied to get a wide number of results. We refer to the introduction of the most recent paper \cite{bks} for an
overview on the topic and for a complete list of references.
Up to our knowledge, the higher dimensional case $N\ge5$ is completely open, because the problem does not have a  variational structure and new ideas are needed.\\

In this paper, we will focus on problem \eqref{soli} when $\Omega$ is a smooth bounded domain in $\R^4$ with Dirichlet boundary condition. We shall rewrite \eqref{soli} in the form
\begin{equation}\label{system}
-\Delta u_i =  \sum_{j =1}^m \beta_{ij} u_j^2u_i  + \lambda_i u_i\ \hbox{in}\ \Omega,\ u_i=0\ \hbox{on}\ \partial\Omega,\ i =1,\dots,m,
\end{equation}
where $\lambda_i$ are real numbers,
as this way it can be seen as a generalization   of the celebrated {\em Brezis--Nirenberg} problem \cite{BrezisNirenberg}
\begin{equation}\label{bn}
-\Delta u = u^3  + \lambda u \ \hbox{in}\ \Omega,\
u=0\ \hbox{on}\ \partial\Omega.
\end{equation}
It is worthwhile to remind that the existence of solutions to \eqref{bn} strongly depends on the geometry of $\Omega$.
In particular, if $\Omega$ is a starshaped domain, then Pohozaev's identity  ensures that \eqref{bn} has no solution when $\lambda\le0.$ 
On the other hand, Brezis and Nirenberg \cite{BrezisNirenberg} proved that \eqref{bn} has a positive solution if and only if $\lambda\in(0,\Lambda_1(\Omega))$ where $\Lambda_1$ is the first eigenvalue of $-\Delta$ with homogeneous Dirichlet condition on $\partial\Omega.$ These solutions are often referred to as {\em least energy} solutions, as they can be obtained also by minimizing the functional 
\[
\frac{1}{2}\int_\Omega\left(|\nabla u|^2-\lambda|u|^2\right)-\frac{1}{4}\int_\Omega|u|^4
\]
restricted to the associated Nehari manifold. Later,  Han \cite{HanAIHP} and 
Rey   \cite{Rey} studied the asymptotic behaviour of this solution as $\lambda\to0$ and proved that it  {\em blows--up} at a point $\xi_0 \in \Omega$ which is a critical point of the  Robin's function, whereas far away from $\xi_0$ his shape resembles the {\em bubble} 
\begin{equation}\label{def bubble}
U_{\d,\x}(x):= \alpha  \frac{\delta}{\delta^2 + |x-\xi|^2},\ \alpha=2\sqrt2.
\end{equation}
Recall that it is well known (see \cite{Aubin,Talenti}) that the family $\{U_{\d,\x}: \ \d >0,\ \x \in \R^4\}$ contains all the positive solutions to the critical problem
\begin{equation}\label{critical single}
-\Delta U = U^3  \ \hbox{in}\ \mathbb R^4 .
\end{equation}
Let us also remind that    the Robin's function is defined by $\mathtt r(x):=H(x,x)$, $x\in \Omega,$ where  $H(x,y)$ is the regular part of 
 the Green function of $-\Delta$ in $\Omega$ with Dirichlet boundary condition.

Successively, relying on the profile of the bubble as a first order approximation, the Ljapunov--Schmidt procedure has been fruitfully used to build both positive and sign--changing solutions to \eqref{bn} blowing--up at different points in $\Omega$ as the parameter $\lambda$ approaches zero
(see for example Rey \cite{Rey} and Musso and Pistoia \cite{MussoPistoiaIndiana2002}).
\\

As far as we know, few results are available about existence and multiplicity of solutions to the critical system \eqref{system}. The first result is due to 
Chen and Zou \cite{ChenZou1}, who considered \eqref{system} with 2 components only 
\begin{equation}\label{system2}\left\{\begin{aligned}
&-\Delta u_1 = \mu_1 u_1^3+\beta u_1u_2^2+\lambda_1 u_1\quad \hbox{in}\ \Omega\\
&-\Delta u_2 = \mu_2 u_2^3+\beta u_1^2u_2+\lambda_2 u_2\quad \hbox{in}\ \Omega\\
 & u_1=u_2=0\quad \hbox{on}\ \partial\Omega.\end{aligned}\right.
\end{equation}
When $0<\lambda_1,\lambda_2<\Lambda_1(\Omega)$,
they proved the existence of a least energy positive solution in the competitive  case (i.e. $\beta<0$) and in the cooperative case (i.e. $\beta>0$)  if $\beta\in(0,\underline\beta]\cup[\overline\beta,+\infty)$, for some $\overline\beta\ge\max\{\mu_1,\mu_2\}>\min\{\mu_1,\mu_2\}\ge \underline\beta>0.$ In the cooperative case, when $\lambda_1=\lambda_2$ the least energy solution is  {\em synchronized}, i.e. $(u_1,u_2)=(c_1u,c_2u)$ where $u$ is the least energy positive solution of the equation \eqref{bn} and $(c_1,c_2)$ is a positive solution to the algebraic system
$$\left\{\begin{aligned}
&1 = \mu_1 c_1^2+\beta c_2^2\\
&1 = \mu_2 c_2^2+\beta c_1^2.\end{aligned}\right.
$$
In the competitive case,  the authors studied also the limit profile of the components of the least energy solution and proved that the following alternative occurs: either one of the components vanishes and the other one converges to a least energy positive solution of the equation \eqref{bn}, or both components survive and their limits separate in different regions of the domain $\Omega$, i.e. a {\em phase separation} phenomenon takes place.
In the subcritical regime such a phenomenon has been studied by Noris, Tavares, Terracini and Verzini \cite{NTTV}.

Afterwards, Chen and Lin \cite{ChenLin} studied the asymptotic behavior of the least energy solution of \eqref{system2} in the cooperative case as $\max\{\lambda_1,\lambda_2\}\to0$ and found that both components blow--up at the same critical point of the Robin's function, in the same spirit of  the result by Han and Rey for the single equation \eqref{bn}. 

The existence of blowing--up solutions for system \eqref{system} with an arbitrary number of components has been studied by
Pistoia and Tavares   \cite{PistoiaTavares}. Using a Ljapunov--Schmidt procedure, they built solutions to \eqref{system} whose $m$ components  blow--up at $m$ different non--degenerate critical points of the Robin's function as $\lambda^*:=\max \{\lambda_1,\dots,\lambda_m\}\to0$, provided
the interaction forces are not too large, namely $\beta^*:=\max\limits_{ij}\beta_{ij}\le \overline \beta $ for some $\overline \beta >0.$ 
For example, their result holds in dumbbell shaped domains which are obtained by connecting $m$ mutually disjoint connected domains $D_1,\dots,D_m$ by thin handles.  In this case the Robin's function has $m$ distinct critical points which are non--degenerate for a generic choice of the domain as proved by Micheletti and Pistoia \cite{MiPi}. 
Moreover, if, as $\lambda^*\to0$, we let  $\beta^*:=\max\limits_{i,j}\beta_{ij}$ approach $-\infty$ with a {\em sufficiently low velocity} (depending  on $\lambda^*$), then it is still possible to show that  all the components   blow--up at different points and a   segregation phenomen occurs.\\
To conclude the state of the art, we would like to mention some recent results obtained by exploiting a variational point of view. Guo, Luo and Zou \cite{GLZ}  proved the existence of a least energy solution to \eqref{system}   in the purely cooperative regime (i.e. $\min_{i\not=j}\beta_{ij}\ge0$) when $\lambda_1=\dots=\lambda_m$ and showed that such a solution is synchronized under some additional technical conditions on the coupling coefficients. Tavares and You \cite{TavaresYou} generalized the previous result to a mixed competitive/weakly cooperative regime (i.e.    $\max_{i\not=j}\beta_{ij}$ not too large).   Clapp and Szulkin \cite{ClSz} found a least energy solution  in the purely competitive regime  (i.e.    $\max_{i\not=j}\beta_{ij}<0$), which is not synchronized when  the coupling terms $\beta_{ij}$ diverge to $-\infty.$\\

Now, let us go back to the result obtained by Pistoia and Tavares  \cite{PistoiaTavares} concerning the existence of solutions to \eqref{system} with all the components
blowing--up around different  points in $\Omega$ when all the mixed forces are repulsive or weakly attractive. It is natural to ask what happens for more general mixed  repulsive and attractive forces. 
Our idea is to assemble the components $u_i$ in groups so that all the interaction forces $\beta_{ij}$ among components of the same group are attractive, while forces among components of different groups are repulsive or weakly attractive. In this setting, 
we address the following
question: 
\begin{itemize}
\item[\bf(Q)] {\em is it possible to find solutions  such that
  each component within a given group   concentrates around the same point  and different groups concentrate around different points?}
\end{itemize}
Given  $1 < q < m$, let us introduce a $q-$decomposition of $m$, namely a vector $ (l_0,\dots,l_q) \in \N^{q+1}$ such that
\[
0=l_0<l_1<\dots<l_{q-1}<l_q=m.
\]
Given a $q$--decomposition   of $m$, we set, for $h=1,\dots,q$,
$$
  I_h:= \{i \in  \{1,\dots,m\}:  l_{h-1} < i \le l_h \}.
$$
In this way, we have partitioned the set $\{1,\ldots, m\}$ into $q$ groups $I_1,\ldots, I_q$, and we can consequently split the components of our system into $q$ groups $\{u_i:\ i\in I_h\}$. 
Notice that if $l_{h}-l_{h-1} = 1$, then $I_h$ reduces to the singleton $\{i\}$, for some $i\in\{1,\dots,m\}$. 
 We will assume that   
for every $h=1,\dots,q$ 
\begin{itemize}
\item[(A1)]  the algebraic system 
\begin{equation}\label{sist2intro}
 1= \sum\limits_{j\in I_h}\beta_{ij}  c _j^ 2\,,  \qquad  i \in I_h,
\end{equation}
has a   solution $\mathfrak c_h=(c_i)_{i\in I_h}$ with $c _i>0$ for every $i\in I_h$;
\item[(A2)] the matrix $\left(\beta_{ij}\right)_{i,j\in I_h}$ is invertible and all the entries are positive.
 \end{itemize}
We observe that (A1) is satisfied for instance if for every $i\neq j$ (see  \cite{Bartsch})
$$\beta_{ij}=:\beta>\max\limits_{i\in I_h}\beta_{ii}\ \hbox{for every}\ i\in I_h.$$
From a PDE point of view,   assumption $(A1)$  is equivalent to require that 
the nonlinear PDE system
\begin{equation}\label{sist1intro}
-\Delta W_i=W_i \sum\limits_{j \in I_h } \beta_{ij} W_j^2  \quad  \hbox{in}\ \mathbb R^n,\quad\  i \in I_h,
\end{equation}
 has a {\em synchronized} solution $W_i=c _i U,$ $i\in I_h$, where the positive function
$$U (x):= \alpha  \frac{1}{1 + |x|^2},\ \alpha=2\sqrt2,$$
 solves the critical equation \eqref{critical single}.
 Assumption (A2) ensures that such a synchronized solution of  \eqref{sist1intro} is {\em non--degenerate} (see \cite[Proposition 1.4]{PistoiaSoave}), in the sense that the   linear system (obtained by linearizing system \eqref{sist1intro} around the synchronized solution)
\begin{equation}\label{sist3intro}
-\Delta v_i=U^2\left[\left(3\beta_{ii}c _i^2+ \sum\limits_{j \in I_h \atop j\not=i}\beta_{ij}  c _j^ 2 \right) v_i+ 2\sum\limits_{j \in I_h \atop j\not=i}\beta_{ij}c _i  c _j  v_j\right] \   \text{in }\mathbb R^n, \quad i \in I_h,
\end{equation}
has  a $5$--dimensional set of solutions
\begin{equation}\label{non-de}
\left(v_1,\dots,v_{|I_h|}\right)\in {\mathtt{span}}\left\{\mathfrak e_h\psi^\ell\ |\ \ell=0,1,\dots,4\right\}\subset\left(H^1_0(\Omega)\right)^{|I_h|}
\end{equation}
 where   $\mathfrak e_h\in \mathbb R^{|I_h|}$ is a suitable vector  (see \cite[Lemma 6.1]{PistoiaSoave}) and the functions 
	$$\psi^0(y)={1-|y|^2\over(1+|y|^2)^2}\quad \hbox{and}\quad \psi^\ell(y)={y_\ell\over (1+|y|^2)^2},\ \ell=1,\dots,4,$$
solve the linear equation
$$-\Delta \psi = 3U^2\psi  \quad \hbox{in}\ \mathbb R^4.$$

We are now in position to state our main result.

\begin{theorem}\label{main} Assume (A1) and (A2). Assume furthermore that the Robin's function has $q$ distint non--degenerate critical points $\xi_1^0,\dots,\xi_q^0$. There exist $\overline\beta>0$ and $\lambda_0>0$ such that, if $\beta^*:=\max\limits_{(i,j)\in I_h\times I_k\atop h\not=k}\beta_{ij}<\overline\beta$ then, for every $(\lambda_i)_{i=1}^m$ with $\lambda_i\in(0,\lambda_0)$, $i=1,\dots,m$, there exists a solution $(u_1,\dots,u_m)$ to \eqref{system} such that, for every $h=1,\dots,q$, each group of components $\{u_i\ :\ i\in I_h\}$  blows--up at $\xi_h^0$ as $\lambda^*:=\max\limits_{i=1,\dots,m}\lambda_i\to0.$\\
Moreover, if, as $\lambda^*\to0$,  $\beta^*$ approaches $-\infty$ slowly enough (depending on $\lambda^*$), i.e.
 $|\beta^*|=O\left(e^{d^*\over\lambda^*}\right)\ \hbox{for some $d^*$ sufficiently small}$, then  all the components belonging to different groups  blow--up at different points and segregate, while the components belonging to the same group blow--up at the same point and aggregate.
\end{theorem}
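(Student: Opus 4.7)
My plan is to carry out a Ljapunov--Schmidt finite-dimensional reduction based on the synchronized bubble ansatz furnished by (A1). For each group $h=1,\dots,q$, with $\mathfrak c_h=(c_i)_{i\in I_h}$ the solution of \eqref{sist2intro}, I look for solutions of the form
\[
u_i \;=\; c_i\, PU_{\delta_h,\xi_h} \;+\; \phi_i,\qquad i\in I_h,\ h=1,\dots,q,
\]
where $P$ is the Dirichlet projection from $D^{1,2}(\R^4)$ onto $H^1_0(\Omega)$, the concentration point $\xi_h$ is taken near the prescribed critical point $\xi_h^0$ of the Robin's function $\mathtt r$, the concentration rate $\delta_h>0$ is a small parameter to be matched with the $\lambda_i$'s, and $\phi=(\phi_i)$ is a remainder belonging to the $L^2$-orthogonal complement of the approximate kernel, namely the span, over $h$ and $\ell=0,\dots,4$, of the projected functions $c_i P(\partial_{\delta,\xi}U_{\delta_h,\xi_h})$ (which reproduce the vectors $\mathfrak e_h \psi^\ell$ from \eqref{non-de} after rescaling). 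First I compute the error $R$ obtained by plugging the ansatz into \eqref{system}: inside each group, \eqref{sist2intro} cancels the leading singular contribution, so $R$ is produced only by the defect $U_{\delta_h,\xi_h}-PU_{\delta_h,\xi_h}$ (controlled by $\delta_h\mathtt r(\xi_h)$), by the linear terms $\lambda_i u_i$, and, across groups, by the cross-interaction $\beta_{ij}(PU_{\delta_h,\xi_h})^2(PU_{\delta_k,\xi_k})$ for $i\in I_h$, $j\in I_k$ with $h\neq k$. These last terms are concentrated far from one another and have $L^{4/3}$-norm small uniformly in the parameters once $\mathrm{dist}(\xi_h,\xi_k)$ is bounded below.

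The auxiliary equation for $\phi$ is solved by a contraction argument after proving uniform invertibility of the linearized operator in the orthogonal complement. This invertibility is exactly the content of assumption (A2), which according to \cite[Proposition 1.4]{PistoiaSoave} gives that \eqref{sist3intro} has only the five-dimensional set of solutions \eqref{non-de}. Standard rescaling and concentration--compactness on each concentration point transfer this spectral information to the full linearization on $\Omega$, with cross-group terms being lower-order perturbations of size $O(\beta^*)$. For $\overline\beta$ small these are reabsorbed and I obtain $\phi=\phi(\delta,\xi)$ with a quantitative bound of the form
\[
\|\phi\|_{H^1_0}\le C\Bigl(\sum_h \delta_h \mathtt r(\xi_h)+ \sum_h\delta_h^2\lambda^*|\log\delta_h|+|\beta^*|\sum_{h\neq k}\tfrac{\delta_h^2\delta_k^2}{\mathrm{dist}(\xi_h,\xi_k)^4}\Bigr)^{1-\epsilon}.
\]
Once $\phi$ is found, solving \eqref{system} is equivalent to a bifurcation equation on the finite-dimensional parameter $(\delta,\xi)$, or equivalently to the variational problem for the reduced energy $\tilde J(\delta,\xi)=J(u(\delta,\xi,\phi(\delta,\xi)))$. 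Its expansion, which is standard for the Brezis--Nirenberg reduction in dimension four, reads
\[
\tilde J(\delta,\xi)=A_0+\sum_{h=1}^q\Bigl[A_h\,\mathtt r(\xi_h)\,\delta_h^2-B_h\Bigl(\sum_{i\in I_h}\lambda_i c_i^2\Bigr)\delta_h^2|\log\delta_h|\Bigr]+\sum_{h\neq k}D_{hk}\,G(\xi_h,\xi_k)\,\delta_h^2\delta_k^2+o(\delta^2),
\]
with $A_h,B_h>0$ depending on $\mathfrak c_h$ and with $D_{hk}$ involving $\beta_{ij}c_i^2c_j^2$ for $i\in I_h,j\in I_k$. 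Critical points in $\delta_h$ fix $\delta_h$ exponentially small in $1/\lambda^*$, namely $\delta_h\asymp e^{-\kappa_h/\lambda^*}$, and the remaining bifurcation equation in $\xi$ is $\nabla_{\xi_h}\mathtt r(\xi_h)=o(1)$. The non-degeneracy of the critical points $\xi_h^0$ of $\mathtt r$, combined with the implicit function theorem, produces a critical point of $\tilde J$ close to $(\xi_1^0,\dots,\xi_q^0)$ and hence the desired solution of \eqref{system}.

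The main obstacle I foresee is controlling the cross-group terms uniformly in $\beta^*$: they enter both the error $R$ and the reduced energy, and only the smallness of $\overline\beta$ prevents them from overwhelming the Robin's function term, which is the one providing the non-degenerate variational structure in $\xi$. The same cross-term analysis gives the segregation statement: because $\delta_h\asymp e^{-\kappa_h/\lambda^*}$, the cross contribution in $\tilde J$ is of order $|\beta^*|\,e^{-4\min_h\kappa_h/\lambda^*}$, so as long as $|\beta^*|=O(e^{d^*/\lambda^*})$ with $d^*<4\min_h\kappa_h$, this term remains negligible relative to the diagonal leading order $\delta_h^2\mathtt r(\xi_h)$. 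Hence the reduced energy still forces $\xi_h$ to be close to the distinct critical points $\xi_h^0$, yielding concentration at $q$ different points. Within each group, since the ansatz is synchronized $u_i\approx c_iPU_{\delta_h,\xi_h}$ with all $c_i>0$, every component in $I_h$ blows up at the same $\xi_h^0$, and across groups they separate, producing the aggregation/segregation dichotomy stated in the theorem.
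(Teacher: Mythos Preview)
Your proposal follows essentially the same Ljapunov--Schmidt scheme as the paper: synchronized bubble ansatz $u_i=c_iPU_{\delta_h,\xi_h}+\phi_i$, linear invertibility on $\mathbf K^\perp$ coming from (A2) and \cite[Proposition 1.4]{PistoiaSoave}, contraction for $\phi$, and a reduced energy whose leading terms are governed by the Robin's function at each $\xi_h$, with the non-degenerate critical points $\xi_h^0$ producing critical points of $\tilde J$. This is exactly the route taken in the paper (cf.\ Propositions \ref{linear}--\ref{ridotto}).

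Two quantitative points should be corrected when you write things out. First, several of your stated orders are off: the paper obtains $\|\bs\phi\|\le C\sum_h\bigl(O(\delta_h^2)+O(\lambda_h^*\delta_h)+\sum_{k\neq h}O(|\beta^*|\delta_h\delta_k)\bigr)$ (linear bound, no $(\cdot)^{1-\epsilon}$), and the cross-group error is of size $|\beta^*|\delta_h\delta_k$, not $|\beta^*|\delta_h^2\delta_k^2$; this comes from $|PU_{\delta_h,\xi_h}(PU_{\delta_k,\xi_k})^2|_{4/3}=O(\delta_h\delta_k)$. Consequently the correct smallness condition for the segregation part is $|\beta^*|\delta_h=o(1)$, i.e.\ $d^*<\min_h d_h$ rather than $d^*<4\min_h\kappa_h$. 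Second, the paper parametrizes $\delta_h=e^{-d_h/\lambda_h^*}$ with $\lambda_h^*=\max_{i\in I_h}\lambda_i$ (group-wise), not with the global $\lambda^*$; this is what makes the reduced energy decouple as in \eqref{en-rid}. Neither issue affects the strategy, only the explicit thresholds.
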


\begin{remark} Theorem \ref{main}   deals with
systems with mixed aggregating and segregating forces (i.e.
some $\beta_{ij}$'s are positive, and some others are negative). This is particularly interesting since
there are few results about  systems with
mixed terms. The subcritical regime has been recently investigated by Byeon, Kwon  and Seok  \cite{bks}, 
	 Byeon, Sato and Wang \cite{BySaWa}, Sato and Wang \cite{SaWa1,SaWa2},
Soave and Tavares \cite{SoaveTavares}, Soave \cite{So} and Wei and Wu \cite{WeiWu}.
As far as we know, there are  only a couple of results concerning the critical regime. The first one has been obtained by  Pistoia and Soave in \cite{PistoiaSoave}, where the authors studied system \eqref{system} when all the $\lambda_i$'s are zero and the domain has some  holes whose size approaches zero. The second one is due to Tavares and You, who in  \cite{TavaresYou} found a least energy solution to system \eqref{system} provided all the parameters $\lambda_i$ are  equal.
\end{remark}

\begin{remark} We strongly believe that the solutions found in Theorem \ref{main} are positive, because
they are constructed as the superposition of positive function and small perturbation term. 
This is true for sure if  the attractive forces $\beta_{ij}$ are small, as proved in \cite{PistoiaTavares}.  
 In the general case, the proof does not work and  some refined  $L^\infty-$estimates of the 
small terms are needed.  We will not afford this issue in the present paper, because the study of the invertibility of the linear operator naturally associated to the problem (see Proposition \ref{linear}) should be performed in spaces equipped with different norms (i.e. $L^\infty$--weighted norms) that may deserve further investigations.
\end{remark}

The proof of Theorem \ref{main}   relies on the well known Ljapunov--Schmidt reduction. The main steps are described in Section \ref{due}, where the details of the proof are omitted whenever it can be obtained, up to minor modifications, by combining the arguments in Pistoia and Tavares \cite{PistoiaTavares} and in Pistoia and Soave \cite{PistoiaSoave}.
Here we limit ourselves to give a detailed proof of the first step of the scheme, as it suggests how to adapt the ideas of \cite{PistoiaTavares,PistoiaSoave} to the present setting. The technical details of this part are developed in the Appendix. Before getting to this, in Section \ref{uno} we recall some well known results that are needed in the following.

\section{Preliminaries}\label{uno}
We denote the standard inner product and norm in $H_0^1(\Omega)$ by 
\[
\langle u, v \rangle_{H_0^1(\Omega)}:= \int_{\Omega} \nabla u \cdot \nabla v, \quad \|u\|_{H_0^1(\Omega)}:= \left( \langle u,u \rangle_{H_0^1(\Omega)}\right)^\frac12,
\]
and the $L^q$-norm ($q \ge 1$) by $|\cdot|_{L^q(\Omega)}$. Whenever the domain of integration $\Omega$ is out of question, we also make use of the shorthand notation $\|u\|$ for $\|u\|_{H_0^1(\Omega)}$ and $|u|_q$ for $|u|_{L^q(\Omega)}$.\\
Let $i:H_0^1(\Omega  ) \to L^{4}(\Omega  )$ be the canonical Sobolev embedding. We consider the adjoint operator $(-\Delta)^{-1}: L^{\frac43}(\Omega  ) \to H_0^1(\Omega  )$ characterized by
\[
(-\Delta)^{-1}(u) = v \quad \iff \quad \begin{cases} -\Delta v= u & \text{in $\Omega$} \\ v \in H_0^1(\Omega) \end{cases}
\]
It is well known that $(-\Delta)^{-1}$ is a continuous operator, and relying on it we can rewrite \eqref{system} as
\begin{equation}\label{pb 1}
u_i = (-\Delta)^{-1}\left(  \sum_{j =1}^m \beta_{ij} u_j^2   u_i+\lambda_iu_i\right),\ \quad\ i=1,\dots,m.
\end{equation}
From now on, we will focus on problem \eqref{pb 1}.\\

 We are going to build a solution $\mf u=(u_1,\dots,u_m)$ to \eqref{pb 1}, whose main term, as the parameters $\lambda_i$ approach zero, is defined in terms of the bubbles $U_{\d,\x}$ given in \eqref{def bubble}. More precisely, let us consider the projection $P  U_{\d,\xi}$ of $U_{\d,\xi}$ into $H_0^1(\Omega )$, i.e. the unique solution to 
$$
-\Delta (P  U_{\d,\xi}) = -\Delta U_{\d,\x} = U_{\d,\x}^3 \ \hbox{in}\ \Omega,\ 
P  U_{\d,\x}  = 0 \ \hbox{on}\ \partial\Omega.$$
We shall use many times the fact that $0 \le P  U_{\d,\x} \le U_{\d,\x}$, which is a simple consequence of the maximum principle. 
Moreover it is well known that
$$PU_{\delta,\xi}(x)=U_{\delta,\xi}(x)-\alpha\delta H(x,\xi)+\mathcal O\left(\delta^{3}\right).$$
Here 
$G(x,y)$ is  the Green function of $-\Delta$ with Dirichlet boundary condition in $\Omega$ and $H(x,y)$ is its regular part.\\
Now,
 we search for a solution $\mf u:= (u_1,\dots,u_m)$  to \eqref{pb 1} as
\begin{equation}\label{ans}
\mf u=\mf W+\bs \phi,\ \hbox{where}\ \mf W:=\left(\mathfrak c_1 PU_{\delta_1,\xi_1},\dots,\mathfrak c_qPU_{\delta_q,\xi_q}\right)\in (H_0^1(\Omega  ))^{|I_{1}|}\times\dots\times (H_0^1(\Omega  ))^{|I_{q}|},\end{equation}
where each vector $\mathfrak c_h\in\mathbb R^{|I_h|}$ is defined in \eqref{sist2intro}, 
  the concentration parameters $\d_h=e^{-{d_h\over\lambda^*_h}}$ with $ {\lambda}^*_h:=\max_{i\in I_h}\lambda_i$,  and the concentration points $\xi_h\in \Omega$ 
are such that $(\mf{d},\bs{\xi}) = (d_1,\dots,d_q, \xi_1,\dots,\xi_q)\in X_{\eta}$, with
\begin{equation}\label{def X}
X_{\eta}:=\left\{ (\mf{d},\bs{\xi}) \in \R^q \times \Omega^q: \ \eta < d_h< \eta^{-1},\ \text{dist}(\xi_h,\partial\Omega)\ge\eta, \ |\xi_h-\xi_k|\ge\eta\ \ \hbox{if}\ h\not=k\right\},
\end{equation}
for some  $\eta \in (0,1).$   Recall that $|I_1|+\dots+|I_q|=m.$
The higher order term   $ \bs \phi=(\phi_1,\dots,\phi_m) \in \left(H_0^1(\Omega  )\right)^m$ belongs to the space  $\bs{K}  ^{\perp}$  whose definition involves the solutions of  the linear equation
\begin{equation}\label{linear eq}
-\Delta \psi = 3 U_{\d,\x}^{2} \psi \quad \text{in $\R^4$}, \quad \psi \in \mathcal{D}^{1,2}(\R^4).
\end{equation}
More precisely, we know that the set of solutions to \eqref{linear eq} is a $5-$dimensional space, which is generated by (see \cite{BianchiEgnell})  
$$\begin{aligned}
\psi_{\d,\x}^0 &:= \frac{\pa U_{\d,\x}}{\pa \d} = \alpha  \frac{|x-\xi|^2 -\delta^2}{\left( \d^2 + |x-\x|^2 \right)^{2}} \\
\psi_{\d,\x}^\ell&:=   \frac{\pa U_{\d,\x}}{\pa \x_\ell} = 2\alpha_N   \d \frac{x_\ell-\xi_\ell}{\left( \d^2 + |x-\x|^2 \right)^{ {2}}}, \quad \ell=1,\dots,4.
\end{aligned}
$$
It is necessary to introduce   the projections $P  \psi_{\d,\xi}^\ell$ of $\psi_{\d,\xi}^\ell$ ($\ell=0,\dots,N$) into $H_0^1(\Omega)$, i.e.
\begin{equation}
\label{Ppsi}
 -\Delta (P\psi_{\d,\xi}^\ell) = -\Delta \psi_{\d,\x}^\ell = 3 U_{\d,\x}^{2} \psi_{\d,\x}^\ell \hbox{ in}\ \Omega,\quad
P \psi_{\d,\x}^\ell = 0\ \hbox{ on}\ \partial\Omega,
\end{equation}
and it is useful to recall that  
$$\begin{aligned}P\psi_{\delta,\xi}^0(x)&=\psi^0_{\delta,\xi}(x)-\alpha  H(x,\xi)+\mathcal O\left(\delta^{2}\right),
\\ P\psi_{\delta,\xi}^\ell(x)&=\psi^\ell_{\delta,\xi}(x)-\alpha\delta \pa_\ell H(x,\xi)+\mathcal O\left(\delta^{3}\right), \quad \ell=1,\dots,4.
\end{aligned}
$$
Now, we define   the space  $\bs{K}  ^{\perp}$   as 
	\begin{equation}
	\label{def_K}
	 \mf{K}  := K_1 \times \cdots \times K_q\
\hbox{ and }\  \bs{K}  ^{\perp} = K_1^\perp \times \cdots \times K_q^\perp,
	\end{equation}
where (see \eqref{non-de})
\begin{equation}\label{kh}
K_h:= \textrm{span}\left\{\mathfrak e_h P \psi_{\d_h,\xi_h}^\ell: \ \ell=0,\dots,4\right\}\subset \left(H_0^1(\Omega)\right)^{|I_{h}|}, \quad h=1,\dots,q.
\end{equation}
The unknowns in \eqref{ans} are the rates of the concentration parameters $d_h$'s, the concentration points $\xi_h$'s and the remainder terms $\phi_i$'s. To identify them, we will use a Ljapunov--Schmidt reduction method.
First, we rewrite system  \eqref{pb 1}   as a couple of systems.
Let  us introduce the orthogonal projections
$$\bs{\Pi}:=(\Pi_1,\dots,\Pi_q):(H_0^1(\Omega  ))^{|I_{1}|}\times\dots\times H_0^1(\Omega  ))^{|I_{q}|}  \to \mf{K}$$
and
$$
\ \bs{\Pi}^\perp:=(\Pi_1^\perp,\dots,\Pi_q^\perp):(H_0^1(\Omega  ))^{|I_{1}|}\times\dots\times H_0^1(\Omega  ))^{|I_{q}|} \to \mf{K}  ^\perp,$$
where
 $\Pi_h :\left(H_0^1(\Omega)\right)^{|I_{h}|}\to K_h$ and  $\Pi_h^{\perp} :\left(H_0^1(\Omega)\right)^{|I_{h}|} \to K_h^{\perp}$ denote  the orthogonal projections, for every $h=1,\dots,q$.
\\
It is not difficult to check that \eqref{pb 1} is equivalent to the couple of systems
\begin{equation}\label{csys1}
 \bs{\Pi}^\perp\left[\bs{\mathcal L}(\bs\phi)+\bs{\mathcal N}(\bs\phi)+\bs{\mathcal E}\right]=0\end{equation}
and \begin{equation}\label{csys2}
 \bs{\Pi}  \left[\bs{\mathcal L}(\bs\phi)+\bs{\mathcal N}(\bs\phi)+\bs{\mathcal E}\right]=0,
\end{equation}
where
  the linear operator ${\bs{\mathcal L}   (\bs{\phi})}=(\mathcal L^1  (\bs{\phi}),\dots,\mathcal L^m  (\bs{\phi})):\left(H^1_0(\Omega)\right)^m\to\left(H^1_0(\Omega)\right)^m$ is defined for every $i\in I_h$ and $h=1,\dots,q$ as
\begin{equation}
\label{def_L}
\begin{aligned} \mathcal L^i  (\bs{\phi}):=  & \phi_i -(-\Delta)^{-1} \left\{\left[\left(3\beta_{ii}{c _i}^2+\sum_{j \in I_h\atop j\neq i} \beta_{ij} {c _j}^{2}\right) \phi_i+2\sum_{j \in I_h\atop j\neq i} \beta_{ij}  c _j  c _i\phi_j\right](P U_{\d_h,\x_h})^2\right.  \\ & \left. 
\qquad+\sum_{k\not=h}\sum_{j  \in I_k }\beta_{ij} \left((c _j P  U_{\d_k,\x_hk} )^2\phi_i +2c _i c _j P  U_{\d_h,\x_h} P  U_{\d_k,\x_hk}\phi_j\right)+\lambda_i\phi_i \right\} ,\end{aligned}  
\end{equation}
the nonlinear term ${\bs{\mathcal N}  (\bs{\phi})}=(\mathcal N^1  (\bs{\phi}),\dots,\mathcal N^m  (\bs{\phi}))\in \left(L^\frac43(\Omega)\right)^m$ is defined for every $i\in I_h$ and $h=1,\dots,q$ as
\begin{equation}
\label{def_N}
\begin{aligned} \mathcal  N^i  (\bs{\phi}):=&  - (-\Delta)^{-1}\left\{\sum_{j \in I_h\atop j\neq i} \beta_{ij} (c _i P  U_{\d_h,\x_h}  \phi_j ^{2}+2c _j P  U_{\d_h,\x_h}\phi_j\phi_i+\phi_j^2\phi_i) \right.\\ & \left.
+\sum_{k\not=h}\sum_{j  \in I_k }\beta_{ij} (c _i P  U_{\d_h,\x_h}  \phi_j ^{2}+2c _j P  U_{\d_k,\x_k}\phi_j\phi_i+\phi_j^2\phi_i) 
  +\beta_{ii}\left(3 c _i P  U_{\d_h,\x_h} \phi_i^2 + \phi_i^3\right)\right\},
\end{aligned}
\end{equation}
and the error term $\bs{\mathcal E}=(\mathcal E^1   ,\dots,\mathcal E^m  )\in \left(L^\frac43(\Omega)\right)^m$ is defined for every $i\in I_h$ and $h=1,\dots,q$ as
\begin{equation}
\label{def_E}
\begin{aligned} \mathcal E^i   :=& -(-\Delta)^{-1}\left\{ \left( \sum_{j \in I_h } \beta_{ij} {c _j}^{2} c _i \right)   \left[ (P U_{\d_h,\x_h})^3-(  U_{\d_h,\x_h})^3 \right]\right.  \\ & \left. 
+ \sum_{k\not=h}\sum_{j  \in I_k } \beta_{ij}  (c _j P  U_{\d_k,\x_k} )^{2}   ( c _i P U_{\d_h,\x_h} )+\lambda_i  c _i P U_{\d_h,\x_h} 
\right\} .\end{aligned}
\end{equation}
In the above computation, we used \eqref{sist2intro} and  \eqref{sist1intro}, so that  for every $i\in I_h$ and $h=1,\dots,q$  
$$ c _i P U_{\d_h,\x_h}=(-\Delta)^{-1}\left[\left( 
 \sum_{j \in I_h } \beta_{ij} {c _j}^2c _i  \right)(   U_{\d_h,\x_h} )^3\right].$$
The proof of our main result consists of two main steps. First, for fixed $\mf d=(d_1,\dots,d_q)$, and $\bs \xi=(\xi_1,\dots,\xi_q)$ we solve  the  system \eqref{csys1}, finding $\bs{\phi}=\bs{\phi}(\mf d,\bs \xi)\in \mf{K}^\perp$. Plugging this choice of $\bs{\phi}$ into the second system   \eqref{csys2}, we obtain a finite dimensional problem in the unknowns $\mf{d}$ and $\bs{\xi}$, whose solution is identified as a critical point of a suitable function.

\section{Proof of Theorem \ref{main}}\label{due}
We briefly sketch the main steps of the proof.

\subsection*{The linear theory}

As a first step, it is important to understand the solvability of the linear problem naturally associated to    \eqref{csys1}, i.e.  given $\bs{\mathcal L}$ as in \eqref{def_L}
$$
\bs{\mathcal L}(\bs{\phi}) = \bf h, \quad \text{with} \quad \bf h \in K ^\perp.
$$
\begin{proposition}\label{linear}
 For every $\eta>0$ small enough there exist $\bar \beta>0$, $\lambda_0>0$ and $C>0$, such that if $\lambda_i \in (0,\lambda_0)$, for every $i=1,\dots,m$, and
\begin{equation}\label{beta}
 \beta^*:= \max_{(i,j)\in I_h\times I_k \atop h\not=k}\beta_{ij}\leq \overline{\beta}\,,
\end{equation}
then
\begin{equation}\label{17ott1}
\|\bs{\mathcal L}  (\bs{\phi})\|_{(H_0^1(\Omega))^m} \ge C \|\bs{\phi}\|_{(H_0^1(\Omega))^m}  \qquad \forall \bs{\phi} \in \bf K ^\perp\,,
\end{equation}
for every $(\mf{d}, \bs{\xi}) \in X_\eta$. Moreover, $\bs{\mathcal L}   $ is invertible in $\mf{K}  ^\perp$ with continuous inverse. \\
\end{proposition}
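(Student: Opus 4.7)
The strategy is by contradiction, adapting the linear theories of \cite{PistoiaTavares,PistoiaSoave} to the grouped blow--up setting. Suppose \eqref{17ott1} fails: then there exist sequences $\lambda_{i,n}\to 0$, $(\mf d_n,\bs\xi_n)\in X_\eta$, coefficients $\beta_{ij,n}$ satisfying \eqref{beta}, and $\bs\phi_n=(\phi_{1,n},\dots,\phi_{m,n})\in \bs K^\perp$ with $\|\bs\phi_n\|=1$ and $\bs{\mathcal L}(\bs\phi_n)\to 0$ in $(H^1_0(\Omega))^m$. Up to a subsequence, $\xi_{h,n}\to\xi_h^\infty$ and $d_{h,n}\to d_h^\infty\in[\eta,\eta^{-1}]$, so $\delta_{h,n}\to 0$ and the points $\xi_1^\infty,\dots,\xi_q^\infty$ remain pairwise distinct and bounded away from $\partial\Omega$.

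The first step is a group--wise rescaling: for every $h=1,\dots,q$ and $i\in I_h$ set
\[
\tilde\phi_{i,n}^h(y):=\delta_{h,n}\phi_{i,n}(\delta_{h,n}y+\xi_{h,n}),\qquad y\in \Omega_{h,n}:=\delta_{h,n}^{-1}(\Omega-\xi_{h,n}),
\]
extended by zero outside $\Omega_{h,n}$. Since $\|\tilde\phi_{i,n}^h\|_{\mathcal D^{1,2}(\R^4)}=\|\phi_{i,n}\|\le 1$, up to a further subsequence $\tilde\phi_{i,n}^h\wc \tilde\phi_i^h$ in $\mathcal D^{1,2}(\R^4)$. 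Testing $\bs{\mathcal L}(\bs\phi_n)\to 0$ against compactly supported test functions in the rescaled variable, using that $PU_{\delta_{h,n},\xi_{h,n}}$ converges to $U$ locally, that $\lambda_{i,n}\delta_{h,n}^2\to 0$, and that the inter--group contributions
\[
\sum_{k\ne h}\sum_{j\in I_k}\beta_{ij,n}\bigl(c_j PU_{\delta_{k,n},\xi_{k,n}}\bigr)^2,\qquad \sum_{k\ne h}\sum_{j\in I_k}\beta_{ij,n}\,c_ic_j PU_{\delta_{h,n},\xi_{h,n}}PU_{\delta_{k,n},\xi_{k,n}}
\]
become negligible after rescaling (thanks both to the separation $|\xi_h^\infty-\xi_k^\infty|\ge\eta$, which localizes different bubbles in disjoint regions, and to $\beta^*\le\overline\beta$ small), one sees that $(\tilde\phi_i^h)_{i\in I_h}$ solves the linearized system \eqref{sist3intro} on $\R^4$. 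The non--degeneracy \eqref{non-de}, granted by (A1)--(A2), then forces $(\tilde\phi_i^h)_{i\in I_h}\in\mathtt{span}\{\mathfrak e_h\psi^\ell:\ell=0,\dots,4\}$.

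Next I exploit $\bs\phi_n\in\bs K^\perp$. The orthogonality
\[
\bigl\langle \mathfrak e_h P\psi^\ell_{\delta_{h,n},\xi_{h,n}},(\phi_{i,n})_{i\in I_h}\bigr\rangle_{(H^1_0(\Omega))^{|I_h|}}=0,\qquad \ell=0,\dots,4,
\]
passes to the limit, via the expansion of $P\psi^\ell_{\delta,\xi}$ recalled in Section \ref{uno}, to $\langle \mathfrak e_h\psi^\ell,(\tilde\phi_i^h)_{i\in I_h}\rangle=0$ for every $\ell$. Combined with the previous step this yields $\tilde\phi_i^h\equiv 0$ for every $i\in I_h$ and every $h$. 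Finally I test $\mathcal L^i(\bs\phi_n)$ against $\phi_{i,n}$ and sum: the diagonal part reproduces $\|\bs\phi_n\|^2=1$, whereas every remaining term has the form $\int_\Omega V_n\phi_{i,n}\phi_{j,n}$ with $V_n$ either concentrated at some $\xi_{h,n}$ (where the rescaling and $\mathcal D^{1,2}\hookrightarrow L^4_{\loc}$ compactness give vanishing contribution, by the previous step) or supported far from all concentration points (where $PU_{\delta_{h,n},\xi_{h,n}}\to 0$ strongly in $L^4$); the $\lambda_{i,n}$ term is absorbed via Poincar\'e and $\lambda_{i,n}\to 0$, while the cross--group terms are absorbed by taking $\overline\beta$ sufficiently small. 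This yields $1=o(1)$, a contradiction. Invertibility with continuous inverse of $\bs{\mathcal L}$ on $\bs K^\perp$ then follows from \eqref{17ott1} together with the fact that $\bs{\mathcal L}$ is a compact perturbation of the identity.

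\textbf{Main difficulty.} The delicate point is the decoupling in Step 2: the weak limit inside each group $I_h$ must independently satisfy the correct synchronized linearized system \eqref{sist3intro}. This is what fixes the role of the smallness condition $\beta^*\le\overline\beta$, which controls the inter--group coupling uniformly in $n$, while the separation constraint built into $X_\eta$ prevents different groups of bubbles from overlapping at the limit. Once these two ingredients are in place, the non--degeneracy \eqref{non-de} provided by (A2) does the rest.
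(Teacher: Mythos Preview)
Your outline follows the paper's strategy (contradiction, group--wise rescaling, identification of the weak limits via non--degeneracy, orthogonality, energy testing, Fredholm), but there is a genuine missing step. The operator whose invertibility is needed for the reduction is $\bs\Pi^\perp\bs{\mathcal L}:\bs K^\perp\to\bs K^\perp$; this is also how the paper reads the statement (and is what makes the Fredholm argument you invoke well--posed). Hence the contradiction hypothesis only gives $\bs\Pi^\perp\bs{\mathcal L}(\bs\phi_n)\to 0$, not $\bs{\mathcal L}(\bs\phi_n)\to 0$. Writing $\bs{\mathcal L}(\bs\phi_n)=\bs h_n+\bs w_n$ with $\bs h_n\in\bs K^\perp$, $\bs w_n\in\bs K$, you are entitled to $\bs h_n\to 0$, but when you test the equation against a compactly supported $\varphi$ (which is \emph{not} in $K_h^\perp$) the term $\langle\bs w_n,\varphi\rangle$ survives. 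The paper devotes its Step~1 precisely to proving $\|\bs w_n\|\to 0$, and this is not routine: one expands $\bs w_n=\sum_\ell a_{h,n}^\ell\,\mathfrak e_h P\psi_{\delta_{h,n},\xi_{h,n}}^\ell$, and the crucial estimate (the term called $II.2$ in the paper) hinges on the algebraic fact that $\mathfrak e_h$ is an eigenvector of the intra--group coupling matrix $(\alpha_{ij})_{i,j\in I_h}$ with eigenvalue $3$ (cf.\ \cite[Lemma~6.1]{PistoiaSoave}). Only this eigenvector identity allows one to convert $\sum_{i\in I_h}\bigl(\alpha_{ii}\phi_i^n+\sum_{j\ne i}\alpha_{ij}\phi_j^n\bigr)\mathfrak e_{i,h}$ into $3\sum_{i\in I_h}\mathfrak e_{i,h}\phi_i^n$ and then exploit $(\phi_i^n)_{i\in I_h}\perp \mathfrak e_h P\psi_{h,n}^\ell$. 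Without this step, your passage to the limit in the rescaled equation is unjustified.

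A smaller point: in your rescaling step the inter--group contributions $B_n,C_n$ vanish solely because the test function has fixed compact support while $|\xi_{h,n}-\xi_{k,n}|\ge\eta$; the smallness of $\overline\beta$ plays no role there. The condition $\beta^*\le\overline\beta$ enters only in the final energy step, where the cross term $\sum_{k\ne h}\sum_{j\in I_k}\beta_{ij}^+\,c_j^2\!\int_\Omega (PU_{\delta_{k,n},\xi_{k,n}})^2(\phi_{i,n})^2$ is bounded by $C\overline\beta\,\|\phi_{i,n}\|^2$ and absorbed into the left--hand side (it does not concentrate at $\xi_{h,n}$, so the rescaling does not kill it). Your ``Main difficulty'' paragraph therefore misidentifies where $\overline\beta$ is actually used.
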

\begin{proof} It is postponed to Appendix.\end{proof}

\subsection*{The error term} We need to estimate the error term $\bs{\mathcal E}$ defined in \eqref{def_E}.
\begin{lemma}\label{errore}
For every $\eta>0$   small enough there exist $\lambda_0>0$ and $C>0$ such that, if $\lambda_i\in(0,\lambda_0)$ for every $i=1,\dots,m$, then
\begin{equation}\label{resto1}
\|\bs{\mathcal E} \|_{(H_0^1(\Omega))^m} \le C  \sum_{h=1}^q\left(O(\delta_{h}^2)+ O(\lambda^*_{h}\delta_{h})+\sum_{k\neq h}O(|\beta^*|\delta_{h}\delta_{k})\right)
\end{equation}
for every $(\mf{d}, \bs{\xi}) \in X_\eta$, where $\lambda^*_{h}:=\max\limits_{i\in I_h}\lambda_i$  and $\beta^*:=\max\limits_{(i,j)\in I_h\times I_k\atop i\not=k}\beta_{ij}.$
\end{lemma}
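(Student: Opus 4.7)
The proof will rely on the continuity of the operator $(-\Delta)^{-1}\colon L^{4/3}(\Omega)\to H_0^1(\Omega)$, which reduces the estimate of $\|\mathcal E^i\|_{H_0^1(\Omega)}$ to a bound on the $L^{4/3}$-norm of the source inside the braces in \eqref{def_E}. A first simplification exploits assumption (A1): since $\sum_{j\in I_h}\beta_{ij}c_j^2=1$ for every $i\in I_h$, the first bracket in \eqref{def_E} collapses to $c_i\bigl[(PU_{\delta_h,\xi_h})^3-(U_{\delta_h,\xi_h})^3\bigr]$. Writing $U_h:=U_{\delta_h,\xi_h}$ and $PU_h:=PU_{\delta_h,\xi_h}$ for brevity, I am then left with three kinds of $L^{4/3}(\Omega)$-bounds to produce, uniformly on $X_\eta$.

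For the self-interaction $(PU_h)^3-U_h^3$, the plan is to expand it as $3U_h^2(PU_h-U_h)+3U_h(PU_h-U_h)^2+(PU_h-U_h)^3$ and to invoke the expansion $PU_h-U_h=-\alpha\delta_h H(\cdot,\xi_h)+O(\delta_h^3)$ recalled in Section~\ref{uno}, which yields $\|PU_h-U_h\|_{L^\infty(\Omega)}=O(\delta_h)$ uniformly for $\mathrm{dist}(\xi_h,\partial\Omega)\ge\eta$. A direct computation via the scaling $y=(x-\xi_h)/\delta_h$ gives $\|U_h\|_{L^{4/3}(\Omega)}=O(\delta_h)$ and $\|U_h^2\|_{L^{4/3}(\Omega)}=O(\delta_h)$, so the three pieces contribute $O(\delta_h^2)$, $O(\delta_h^3)$, $O(\delta_h^3)$ respectively, and the self-interaction is $O(\delta_h^2)$. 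The linear term is even simpler: $\|\lambda_i c_iPU_h\|_{L^{4/3}(\Omega)}\le C\lambda_i\|U_h\|_{L^{4/3}(\Omega)}=O(\lambda_i\delta_h)\le O(\lambda^*_h\delta_h)$.

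The cross-interaction $\beta_{ij}(c_jPU_{\delta_k,\xi_k})^2(c_iPU_{\delta_h,\xi_h})$ with $h\neq k$ is the delicate term. Using $0\le PU\le U$ reduces the task to estimating $\|U_hU_k^2\|_{L^{4/3}(\Omega)}$. I would split $\Omega$ into $A:=\{|x-\xi_h|<\eta/2\}$ and $\Omega\setminus A$: on $A$, the separation $|\xi_h-\xi_k|\ge\eta$ built into the definition of $X_\eta$ forces $|x-\xi_k|\ge\eta/2$, so that $U_k\le C\delta_k$ and $\|U_hU_k^2\|_{L^{4/3}(A)}\le C\delta_k^2\|U_h\|_{L^{4/3}(\Omega)}=O(\delta_h\delta_k^2)$; on $\Omega\setminus A$, $U_h\le C\delta_h$ and $\|U_hU_k^2\|_{L^{4/3}(\Omega\setminus A)}\le C\delta_h\|U_k^2\|_{L^{4/3}(\Omega)}=O(\delta_h\delta_k)$. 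Summing the two pieces gives $\|U_hU_k^2\|_{L^{4/3}(\Omega)}=O(\delta_h\delta_k)$, and multiplication by $|\beta_{ij}|\le|\beta^*|$ yields the claimed $O(|\beta^*|\delta_h\delta_k)$ contribution.

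Collecting the three estimates, summing over $i\in I_h$ and $h=1,\dots,q$, and absorbing the fixed positive constants $c_i$ together with the diagonal-block coefficients $\{\beta_{ij}:i,j\in I_h\}$ into the implicit $C$, yields \eqref{resto1}. The substantive obstacle is the cross-interaction bound, whose sharp $O(\delta_h\delta_k)$ decay relies crucially on the enforced geometric separation $|\xi_h-\xi_k|\ge\eta$: it is precisely this slack that allows, in the second part of Theorem~\ref{main}, $|\beta^*|$ to be driven to $+\infty$ slowly as $\lambda^*\to 0$ while the product $|\beta^*|\delta_h\delta_k$ remains negligible.
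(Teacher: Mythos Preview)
Your proof is correct and follows essentially the same route as the paper: reduce to $L^{4/3}$ bounds via the continuity of $(-\Delta)^{-1}$, and estimate separately the self-interaction term $(PU_h)^3-U_h^3$, the linear term $\lambda_i PU_h$, and the cross-interaction term $PU_h(PU_k)^2$. The paper states the three resulting bounds $O(\delta_h^2)$, $O(\delta_h)$, $O(\delta_h\delta_k)$ and cites \cite[Lemma~A.1--A.3]{PistoiaTavares} for their proofs, whereas you supply direct elementary arguments (the $L^\infty$ control of $PU_h-U_h$ together with scaling, and the near/far splitting around $\xi_h$ exploiting $|\xi_h-\xi_k|\ge\eta$); this is exactly the content of those cited lemmas.
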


\begin{proof}
We argue as in \cite[Lemma A.1--A.3]{PistoiaTavares}. Note first that, by the continuity of $(-\Delta)^{-1}$, for every $i\in I_h$
\[
\begin{split}
\|\mathcal E  ^i\|\leq & C\left(  \sum_{j \in I_h } |\beta_{ij}| {c _j}^{2} c _i \right)\left|(P U_{\d_h,\x_h})^3-(  U_{\d_h,\x_h})^3\right|_{\frac{4}{3}}\\
+&C\sum_{k\not=h}\sum_{j  \in I_k } |\beta_{ij}|  c _i c _j^2\left|P U_{\d_h,\x_h}(P  U_{\d_k,\x_k} )^{2}\right|_\frac{4}{3}+\lambda_i c _i\left|P U_{\d_h,\x_h} \right|_\frac{4}{3}.
\end{split}
\]
Moreover,
\[
\left|(P U_{\d_h,\x_h})^3-(  U_{\d_h,\x_h})^3\right|_{\frac{4}{3}} =O(\delta_{h}^2),
\]
$$\left|P U_{\d_h,\x_h}P  U_{\d_k,\x_k} ^{2}\right|_\frac{4}{3}= O(\delta_h\delta_k)$$
and
\[
\left|P U_{\d_h,\x_h} \right|_\frac{4}{3}=O(\delta_{h})\,.
\]
Then the claim follows.\end{proof}

\subsection*{Solving (\ref{csys1}).}
We combine all the previous results and a standard contraction mapping argument and we prove the solvability of the system \eqref{csys1}. 
\begin{proposition}\label{phi}
 For every $\eta>0$ small enough there exist $\bar \beta>0$, $ \lambda_0>0$ and $C>0$ such that, if $\lambda_i \in (0,\lambda_0)$ for every $i=1,\dots,m$
 and \eqref{beta} holds,
then for every $(\mf{d},\bs{\xi}) \in X_\eta$ there exists a unique function $\bs{\phi}  =\bs{\phi}( \mf{d},\bs{\xi}) \in \bs{K}  ^\perp$ solving   system \eqref{csys1}. Moreover,
\begin{equation}\label{rid2}
\|\bs{\phi}   \|_{\left(H_0^1(\Omega  )\right)^m} \le C\sum_{h=1}^q\left(O(\delta_{h}^2)+O(\lambda^*_{h}\delta_{h})+\sum_{k\neq h}O(|\beta^*|\delta_{h}\delta_{k})\right)
\end{equation}
and $(\mf{d},\bs{\xi}) \mapsto \bs{\phi}( \mf{d},\bs{\xi}) $ is a $C^1-$function.
\end{proposition}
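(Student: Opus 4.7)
\medskip

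\textbf{Plan.} The strategy is the classical contraction/fixed-point argument: invert $\bs{\mathcal L}$ via Proposition \ref{linear} and reformulate \eqref{csys1} as the fixed-point equation
\begin{equation*}
\bs\phi = T(\bs\phi) := -\,\bs{\mathcal L}^{-1}\,\bs\Pi^\perp\bigl[\bs{\mathcal N}(\bs\phi)+\bs{\mathcal E}\bigr]
\end{equation*}
in $\bs K^\perp$. By Proposition \ref{linear}, $\bs{\mathcal L}^{-1}$ has operator norm bounded independently of $(\mf d,\bs\xi)\in X_\eta$ (provided $\lambda^*$ is small and $\beta^*\le\overline\beta$), so it suffices to show that $T$ is a contraction on a ball in $\bs K^\perp$ of radius equal to the right-hand side of \eqref{rid2}, which I denote by $R=R(\lambda^*,\beta^*,\bs\delta)$.

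\medskip

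\textbf{Estimates on $\bs{\mathcal N}$.} The first substantive step is to bound $\bs{\mathcal N}$. Inspecting the definition \eqref{def_N}, every summand in $\mathcal N^i(\bs\phi)$ is either quadratic or cubic in the components of $\bs\phi$, multiplied by bounded coefficients and by functions of the form $PU_{\delta_h,\xi_h}$ which are uniformly bounded in $L^4(\Omega)$. By H\"older's inequality and the continuity of $(-\Delta)^{-1}\colon L^{4/3}\to H^1_0$, one obtains the two estimates
\begin{align*}
\|\bs{\mathcal N}(\bs\phi)\|_{(H_0^1)^m} &\le C(1+|\beta^*|)\bigl(\|\bs\phi\|^2+\|\bs\phi\|^3\bigr),\\
\|\bs{\mathcal N}(\bs\phi_1)-\bs{\mathcal N}(\bs\phi_2)\|_{(H_0^1)^m} &\le C(1+|\beta^*|)\bigl(\|\bs\phi_1\|+\|\bs\phi_2\|+\|\bs\phi_1\|^2+\|\bs\phi_2\|^2\bigr)\|\bs\phi_1-\bs\phi_2\|,
\end{align*}
both valid for $(\mf d,\bs\xi)\in X_\eta$. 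Combining with Lemma \ref{errore} and the uniform bound on $\|\bs{\mathcal L}^{-1}\|$, one finds that for $\bs\phi$ with $\|\bs\phi\|\le 2R$ the map $T$ satisfies $\|T(\bs\phi)\|\le C(R^2+R)+CR\le 2R$ and $\|T(\bs\phi_1)-T(\bs\phi_2)\|\le \tfrac12\|\bs\phi_1-\bs\phi_2\|$, provided $\lambda^*$ (hence $R$) is small enough and $\beta^*\le \overline\beta$. Banach's fixed-point theorem in the closed ball of radius $2R$ in $\bs K^\perp$ yields a unique solution $\bs\phi(\mf d,\bs\xi)$ to \eqref{csys1}, and the bound \eqref{rid2} follows directly from $\|\bs\phi\|\le 2R$.

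\medskip

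\textbf{$C^1$ dependence on $(\mf d,\bs\xi)$.} I would apply the implicit function theorem to the map
\begin{equation*}
F(\bs\phi,\mf d,\bs\xi):=\bs\phi+\bs{\mathcal L}^{-1}\bs\Pi^\perp\bigl[\bs{\mathcal N}(\bs\phi)+\bs{\mathcal E}\bigr].
\end{equation*}
The partial derivative with respect to $\bs\phi$ at the fixed point is $I+\bs{\mathcal L}^{-1}\bs\Pi^\perp D\bs{\mathcal N}(\bs\phi)$; since $D\bs{\mathcal N}(\bs\phi)$ is $O(\|\bs\phi\|)+O(\|\bs\phi\|^2)=o(1)$ as $\lambda^*\to 0$, this operator is invertible on $\bs K^\perp$ by Neumann series. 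Smoothness of $F$ in $(\mf d,\bs\xi)$ reduces to smoothness of $\bs{\mathcal E}$ and of the projections $\bs\Pi^\perp$, both of which are standard (the projections depend smoothly on $\bs\xi,\mf d$ through $PU_{\delta_h,\xi_h}$ and $P\psi^\ell_{\delta_h,\xi_h}$).

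\medskip

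\textbf{Main obstacle.} The delicate point is keeping track of the $(1+|\beta^*|)$ factor when $\beta^*$ is allowed to be very negative (as required for the segregation regime of Theorem \ref{main}, where $|\beta^*|=O(e^{d^*/\lambda^*})$). The contraction argument then demands that $(1+|\beta^*|)R\ll 1$, which in turn forces a compatibility between the smallness of $d^*$ and that of $\lambda^*$; this is precisely the quantitative trade-off already built into the hypotheses of Theorem \ref{main}, and it propagates cleanly through the above scheme. Everything else reduces to H\"older estimates and routine applications of the continuity of $(-\Delta)^{-1}$, as in \cite[Prop.~2.2]{PistoiaTavares} and \cite{PistoiaSoave}.
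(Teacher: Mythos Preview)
Your proposal is correct and follows essentially the same approach as the paper: the paper's proof consists of a one-line reference to the contraction-mapping argument of \cite[Proposition 3.2 and Lemma 3.3]{PistoiaTavares}, relying on Proposition~\ref{linear}, Lemma~\ref{errore}, and the (at least) quadratic growth of $\bs{\mathcal N}$ in $\bs\phi$, which is exactly the scheme you outline. Your additional remark on the $(1+|\beta^*|)$ factor and the required compatibility $|\beta^*|R\ll 1$ is also the content of the last paragraph of the paper's proof of Theorem~\ref{main}.
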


\begin{proof}
The claim follows by Proposition \ref{linear} and Lemma \ref{errore} arguing exactly as in \cite[Proposition 3.2 and Lemma 3.3]{PistoiaTavares}, noting that the nonlinear part $\bs{\mathcal N}$ given in \eqref{def_N} has a quadratic growth in $\bs\phi$. In particular \eqref{rid2} follows by \eqref{resto1}
\end{proof}

\subsection*{The reduced problem} 
Once the first system \eqref{csys1} has been solved, we have to find a solution to the second system \eqref{csys2} and so a solution to system \eqref{system}.

\begin{proposition}\label{ridotto} For any $\eta>0$ small enough there exist $\overline\beta>0$ and $\lambda_0>0$  such that,
if $\lambda_i\in(0,\lambda_0)$ for every $i=1,\dots,m$ and \eqref{beta} holds, then
$\mf u=\mf W(\mf d,\bs\xi)+\bs\phi(\mf d,\bs\xi)$ defined in \eqref{ans} solves system \eqref{system}, i.e. it is a critical point of the energy
$$J(\mf u):=\frac12\sum\limits_{i=1}^m\int\limits_{\Omega} |\nabla u_i|^2- \frac14\sum\limits_{i,j=1 }^m\beta_{ij} \int\limits_{\Omega}u_i^2u_j^2-\frac12\sum\limits_{i=1}^m\int\limits_{\Omega}\lambda_iu_i^2 $$
if and only if $(\mf d,\bs\xi)\in X_\eta$   is a critical point of the reduced energy
$$\tilde J(\mf d,\bs\xi):=J\left(\mf W+\bs \phi\right).$$
Moreover, the following expansion holds true
\begin{equation}\label{en-rid}\tilde J(\bs\delta,\bs\xi)=\sum\limits_{h=1}^q\left(\sum\limits_{i\in I_h}c _i^2\right)\left( A_0+  A_1\delta_h^2\mathtt r(\xi_h)+   A_2\lambda_h^*\delta_h^2|\ln\delta_h| +o(\delta_h^2)\right)\end{equation}
$C^1-$uniformly   in $X_\eta.$
Here the $ A_i$'s are positive constants, $\mathtt r$  is the Robin's function and $\lambda_h^*=\max\limits_{i\in I_h}\lambda_i.$
\end{proposition}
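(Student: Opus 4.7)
My plan splits into two parts: (i) the variational characterization of solutions to \eqref{system} as critical points of $\tilde J$, and (ii) the asymptotic expansion \eqref{en-rid}.

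For (i), I would argue as in the classical Ljapunov--Schmidt procedure. Since by Proposition \ref{phi} the map $\bs\phi(\mf d,\bs\xi)$ already solves \eqref{csys1}, the pair $\mf u = \mf W + \bs\phi$ solves \eqref{system} if and only if \eqref{csys2} also holds. Differentiating $\tilde J$ in any parameter $\ast \in \{d_h, \xi_h^\ell\}$ gives
$$
\pa_\ast \tilde J = \langle J'(\mf W + \bs\phi),\, \pa_\ast \mf W + \pa_\ast \bs\phi \rangle.
$$
Since $\bs\phi(\mf d,\bs\xi) \in \bs K^\perp$ for all admissible parameters, also $\pa_\ast \bs\phi \in \bs K^\perp$, and hence $\langle J'(\mf W + \bs\phi), \pa_\ast \bs\phi\rangle = 0$ by \eqref{csys1}. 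Thus $\nabla \tilde J = 0$ reduces to $\langle J'(\mf W + \bs\phi), \pa_\ast \mf W\rangle = 0$ for every $\ast$. The vectors $\{\pa_{d_h}\mf W, \pa_{\xi_h^\ell}\mf W\}$ span, up to small corrections, the space $\bs K$ (using that the synchronizing vector $\mathfrak e_h$ in \eqref{non-de} is parallel to $\mathfrak c_h$, see \eqref{sist3intro}), so the non-degeneracy of the resulting Gram-like matrix---a standard point, established for instance in \cite{PistoiaTavares,PistoiaSoave}---makes this system equivalent to \eqref{csys2}.

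For (ii), I would first reduce to the expansion of $J(\mf W)$. Taylor expanding $J$ at $\mf W$,
$$
\tilde J - J(\mf W) = \langle J'(\mf W), \bs\phi\rangle + \tfrac12\langle J''(\mf W)\bs\phi, \bs\phi\rangle + O(\|\bs\phi\|^3),
$$
where the linear term equals, up to the duality induced by $(-\Delta)^{-1}$, $\langle \bs{\mathcal E}, \bs\phi\rangle$, hence of order $O(\|\bs{\mathcal E}\|^2)$ by Cauchy--Schwarz combined with Lemma \ref{errore} and \eqref{rid2}. Choosing $\overline\beta$ small enough so that $|\beta^*|\delta_h\delta_k \ll \delta_h^2 + \delta_k^2$, all remainder terms are $o(\delta_h^2)$ uniformly on $X_\eta$. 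To expand $J(\mf W)$ itself, I plug in $\mf W_h = \mathfrak c_h P U_{\delta_h,\xi_h}$ and use \eqref{sist2intro} to collapse the diagonal quartic coefficients $\sum_{j\in I_h}\beta_{ij}c_j^2 = 1$; this produces
$$
J(\mf W) = \sum_{h=1}^q \Big(\sum_{i\in I_h} c_i^2\Big)\, J_{\mathrm{BN}}^{\lambda_h^*}(PU_{\delta_h,\xi_h}) + R,
$$
where $J_{\mathrm{BN}}^{\lambda}$ is the Brezis--Nirenberg energy for the single equation and $R$ collects the intergroup cross terms ($O(|\beta^*|\delta_h\delta_k) = o(\delta_h^2+\delta_k^2)$ by Young and $|\xi_h-\xi_k|\ge\eta$) together with the discrepancies $\lambda_i - \lambda_h^*$ for $i\in I_h$ (contributing only $o(\delta_h^2)$, since $\lambda_h^*\delta_h^2|\ln\delta_h|$ is bounded thanks to $\delta_h = e^{-d_h/\lambda_h^*}$). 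The classical four-dimensional expansion of $J_{\mathrm{BN}}^\lambda(PU_{\delta,\xi})$ from \cite{HanAIHP,Rey} then delivers the three constants $A_0$, $A_1\delta_h^2\mathtt r(\xi_h)$ and $A_2\lambda_h^*\delta_h^2|\ln\delta_h|$, the logarithm arising from $\int U_{\delta,\xi}^2 \sim \delta^2|\ln\delta|$ in dimension four.

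The main obstacle is the $C^1$-uniformity: every estimate above must survive differentiation in $(\mf d,\bs\xi)$. This is the usual price in Ljapunov--Schmidt reductions and relies on the $C^1$-regularity of $(\mf d,\bs\xi)\mapsto\bs\phi$ provided by Proposition \ref{phi}, on the explicit formulae for $\pa_{d_h} PU_{\delta_h,\xi_h}$ and $\pa_{\xi_h^\ell} PU_{\delta_h,\xi_h}$ in terms of the projected functions $P\psi^\ell_{\delta_h,\xi_h}$ from \eqref{Ppsi}, and on a repetition of the Taylor/error argument after inserting these derivatives. Since $\bs\phi$ and its $(\mf d,\bs\xi)$-derivative satisfy bounds of the same nature (implicit differentiation of \eqref{csys1}, with invertibility from Proposition \ref{linear}), these differentiated estimates are substantially more tedious but follow the same scheme; it is here that I would expect the bulk of the technical work to lie.
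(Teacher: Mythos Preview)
Your proposal follows essentially the same route as the paper, which itself merely sketches the argument and defers to \cite{PistoiaTavares,PistoiaSoave}; in particular your use of \eqref{sist2intro} to collapse the in-group quartic coefficients and reduce $J(\mf W)$ to a sum of single-equation Brezis--Nirenberg energies is exactly the computation the paper highlights. One small correction in part (i): the implication ``$\bs\phi(\mf d,\bs\xi)\in\bs K^\perp$ for all parameters $\Rightarrow \partial_*\bs\phi\in\bs K^\perp$'' is false, because the space $\bs K^\perp$ itself depends on $(\mf d,\bs\xi)$; the standard fix is to observe that \eqref{csys1} forces $J'(\mf W+\bs\phi)\in\bs K$, so in $\partial_*\tilde J=\langle J'(\mf W+\bs\phi),\partial_*(\mf W+\bs\phi)\rangle$ only the $\bs K$-component of $\partial_*(\mf W+\bs\phi)$ survives, and $\bs\Pi(\partial_*\bs\phi)$ is a lower-order perturbation (controlled by $\|\bs\phi\|$ and the variation of the projections) that gets absorbed into the Gram-matrix invertibility you already invoke.
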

\begin{proof}
The proof follows by combining the arguments in  \cite[Section 3 and Section 5]{PistoiaTavares} and  \cite[Section 5]{PistoiaSoave}. We   remark that in this case the fact that  $\bs\phi(\mf d,\bs\xi)$ solves \eqref{csys1} is equivalent to claim that it solves the system
$$ \bs{\mathcal L}(\bs\phi) -\bs{\mathcal N}(\bs\phi)- \bs{\mathcal E}=\left(\sum_{\ell=0}^4a_1^\ell\mathfrak e_1 P \psi_{\d_1,\xi_1}^\ell,\dots,\sum_{\ell=0}^4a_h^\ell\mathfrak e_h P \psi_{\d_h,\xi_h}^\ell\right),
$$
for some real  numbers $a_i^\ell.$ Therefore, $\mf W(\mf d,\bs\xi)+\bs\phi(\mf d,\bs\xi)$ solves system \eqref{csys2} if and only if all the $a_i^\ell$'s are zero.
We also point out that it is quite standard to prove that
$J\left(\mf W+\bs \phi\right)\approx J\left(\mf W \right)$
and moreover by \eqref{sist2intro} we deduce
$$\begin{aligned}J\left(\mf W \right)&=\sum\limits_{h=1}^q \underbrace{\left(\sum\limits_{i\in I_h}\frac12 c_i^2\int\limits_\Omega| \nabla PU_{\delta_h,\xi_h}|^2-\frac14 \sum\limits_{i,j\in I_h }\beta_{ij} (c_i c_j)^2\int\limits_\Omega(PU_{\delta_h,\xi_h})^4 \right) }_{=\left(\sum\limits_{i\in I_h} c_i^2\right)\left(\frac12\int\limits_\Omega| \nabla PU_{\delta_h,\xi_h}|^2-\frac14  \int\limits_\Omega(PU_{\delta_h,\xi_h})^4\right)}\\
&-\frac12\sum\limits_{h,k=1\atop h\not=k}\beta_{ij}\int\limits_\Omega ( c_iPU_{\delta_h,\xi_h})^2( c_j PU_{\delta_k,\xi_k})^2-
\sum\limits_{h=1}^q\sum\limits_{i\in I_h}\frac12\lambda_i \int\limits_\Omega ( c_iPU_{\delta_h,\xi_h})^2
\end{aligned}$$
so that the claim follows just arguing as in \cite{PistoiaTavares}.\\
\end{proof}
\subsection*{Proof of Theorem \ref{main}: completed}
Arguing exactly as in \cite[Proof of Theorem 1.3, p. 437]{PistoiaTavares}, we prove that the reduced energy \eqref{en-rid} has a critical point $(\mf d_{\bs\lambda},\bs\xi_{\bs\lambda})$ provided $\bs\lambda= (\lambda_1,\dots,\lambda_m)$ is small enough and    $\bs \xi_{\bs\lambda}\to (\xi^0_1,\dots,\xi_q^0)$ as $\lambda^*=\max_i\lambda_i\to0.$ Theorem \ref{main} immediately follows by   Proposition \ref{ridotto}.
Moreover, if  the $\beta_{ij}$'s depend on the $\lambda_i$'s  and $\beta^*$  satisfies
 $|\beta^*|=O\left(e^{d^*\over\lambda^*}\right)$ with $d^*<\min _{h=1,\dots,q}d_i,$ then for every $h=1,\dots,m$
 $$|\beta^*|\delta_h\lesssim e^{{d^* \over\lambda^*}-{d_h \over\lambda^*_h}}\lesssim e^{{d^*  -d_h \over\lambda^*}} =o(1)$$
and by estimate \eqref{rid2} we can still conclude  the validity of \eqref{en-rid}, and so the last part of Theorem \ref{main} follows (see also \cite[Section 5.3, p.438]{PistoiaTavares}.
\section {Appendix}

\subsection* {Proof of Proposition \ref{linear}}
We argue combining ideas of \cite[Lemma 3.1]{PistoiaTavares} and \cite[Lemma 5.4]{PistoiaSoave}. We first prove \eqref{17ott1} by contradiction. Assume thus that there exist $\{({\bf d}_n,\boldsymbol{\xi}_n)\}_n\subset X_\eta$ so that $\boldsymbol{\xi}_n\to\boldsymbol{\xi}$ as $n\to+\infty$, $\boldsymbol{\lambda}_n:=(\lambda_{1,n},\,\dots,\,\lambda_{m,n})\to0$ as $n\to+\infty$, and $\boldsymbol{\phi}^n:=(\phi_1^n,\,\dots,\,\phi_m^n)\in{\bf K}^\perp$ so that $\|\boldsymbol{\phi}^n\|=1$ for every $n\in\mathbb{N}$ and
\[
\|\bs{\mathcal L} (\boldsymbol{\phi}^n)\|\to 0\qquad\text{as }n\to+\infty\,.
\]
We recall that the spaces introduced in \eqref{def_K} and \eqref{kh} depend on $\bf d_n$ and $\bs \xi_n,$ so
for the sake of clarity,  let us introduce the following notation. For every $h=1,\,\dots,\,q$, let 
\[
\begin{split}
K_h^n&:=K_{d_{h,n},\xi_{h,n} },\quad\quad (K_h^n)^\perp:=K_{d_{h,n},\xi_{h,n} }^\perp,\\
U_h^n&:=U_{\delta_{h,n},\xi_{h,n}},\qquad\quad PU_h^n:=PU_{\delta_{h,n},\xi_{h,n}},\\
\psi_{h,n}^l&:=\psi_{\delta_{h,n},\xi_{h,n}}^l,\quad\quad\,\, P\psi_{h,n}^l:=P\psi_{\delta_{h,n},\xi_{h,n}}^l,\qquad l=0,\,\dots,\,4\,,
\end{split}
\] 
where $\delta_{h,n}:=e^{-\frac{d_{h,n}}{ {\lambda}^*_{h,n}}}$ and $ {\lambda}^*_{h,n}:=\max_{i\in I_h}\lambda_{i,n}$. Moreover, set ${\bf h}_n:=\bs{\mathcal  L} (\boldsymbol{\phi}^n)$.

By definition of $\bs{\mathcal  L}$ and the fact that $\boldsymbol{\phi}^n\in {\bf K} ^\perp$, we have, for every $h=1,\,\dots,\, q$ and $i\in I_h$ 
\begin{equation}
\label{phi_in_perp}
\begin{split}
\phi_i^n=&(-\Delta)^{-1}\left\{\left[\left(3\mu_i  c _i^2+\sum_{j\in I_h\atop j\neq i}\beta_{ij} c _j^2\right)\phi_i^n+2\sum_{j \in I_h\atop j\neq i}\beta_{ij} c _i c _j\phi_j^n\right]\left(PU_h^n\right)^2\right.\\
&\left.+\sum_{k\neq h}\sum_{j \in I_k}\beta_{ij}\left[ c _j^2(PU_k^n)^2\phi_i^n+2 c _i c _jPU_k^n PU_h^n\phi_j^n\right]+\lambda_{i,n}\phi_i^n\right\}+h_i^n-w_i^n\,,
\end{split}
\end{equation}
for a suitable ${\bf w}_n:=(w_i^n)_{i\in I_h}\in K_h^n$. Here   $\mu_i:=\beta_{ii}.$

\smallskip
{\em Step 1: $\|{\bf w}_n\|\to0$ as $n\to+\infty$.} Multiplying \eqref{phi_in_perp} by $\delta_{h,n}^2w_i^n$ and recalling the definition of $(-\Delta)^{-1}$ yields
\[
\begin{split}
\delta_{h,n}^2\|w_i^n\|^2=&\delta_{h,n}^2\langle h_i^n-\phi_i^n,w_i^n\rangle +\delta_{h,n}^2\left(3\mu_i c _i^2+\sum_{j \in I_h\atop j\neq i}\beta_{ij} c _j^2\right)\int (PU_h^n)^2\phi_i^n w_i^n\\
+&2\delta_{h,n}^2\sum_{j \in I_h\atop j\neq i}\beta_{ij} c _i c _j\int(PU_h^n)^2\phi_j^n w_i^n+\delta_{h,n}^2\sum_{k\neq h}\sum_{j  \in I_k }\beta_{ij} c _j^2\int(PU_k^n)^2\phi_i^n w_i^n\\
+&2\delta_{h,n}^2 c _i\sum_{k\neq h}\sum_{j  \in I_k }\beta_{ij} c _j\int PU_k^n PU_h^n\phi_j^n w_i^n +\delta_{h,n}^2\lambda_{i,n}\int \phi_i^n w_i^n\,,
\end{split}
\]
so that, summing over $i\in I_h$ and making use of $(\phi_i^n)_{i\in I_h},(h_i^n)_{i\in I_h}\in (K_h^n)^\perp$,
\begin{equation}
	\label{norm_w1}
	\begin{split}
		&\underbrace{\delta_{h,n}^2\sum_{i\in I_h}\|w_i^n\|^2}_{I}\\=&\delta_{h,n}^2\underbrace{\sum_{i\in I_h}\left[\left(3\mu_i c _i^2+\sum_{j \in I_h\atop j\neq i}\beta_{ij} c _j^2\right)\int (PU_h^n)^2\phi_i^n w_i^n+2\delta_{h,n}^2\sum_{j \in I_h\atop j\neq i}\beta_{ij} c _i c _j\int(PU_h^n)^2\phi_j^n w_i^n\right]}_{II}\\
		+&\delta_{h,n}^2\underbrace{\sum_{i\in I_h}\sum_{k\neq h}\sum_{j  \in I_k }\beta_{ij} c _j^2\int(PU_k^n)^2\phi_i^n w_i^n}_{III}+2\delta_{h,n}^2\underbrace{\sum_{i\in I_h} c _i\sum_{k\neq h}\sum_{j  \in I_k }\beta_{ij} c _j\int PU_k^n PU_h^n\phi_j^n w_i^n}_{IV}\\
		+&\delta_{h,n}^2\underbrace{\sum_{i\in I_h}\lambda_{i,n}\int \phi_i^n w_i^n}_{V}\,.
	\end{split}
\end{equation}
Note first that, since $(w_i^n)_{i\in I_h}\in K_h^n$, for $l=0,\,\dots,\,4$ there are $a_{h,n}^l\in\mathbb{R}$ for which it holds (see \eqref{def_K})
\[
(w_i^n)_{i\in I_h}=\sum_{l=0}^n a_{h,n}^l\mathfrak{e}_h P\psi_{h,n}^l\,,
\]
so that arguing as in \cite[p. 417]{PistoiaTavares} and for sufficiently large $n$ we can write
\begin{equation}
\label{I}
I=\delta_{h,n}^2\sum_{i\in I_h}\sum_{l,p=0}^na_{h,n}^la_{h,n}^p|\mathfrak{e}_{i,h}|^2\int\nabla P\psi_{h,n}^l\cdot\nabla P\psi_{h,n}^p= \sum_{l=0}^n(a_{h,n}^l)^2\sigma_{ll}+o(1)\sum_{l=0}^na_{h,n}^l a_{h,n}^p\,,
\end{equation}
for suitable positive constants $\sigma_{ll}$, $l=0,\,\dots,\,4$.

Let us thus estimate terms $III$ and $IV$ in \eqref{norm_w1}. On the one hand, for every $h,k=1,\,\dots, q$, $k\neq h$, $i\in I_h$ and $l=0,\,\dots,4$,
\begin{equation}
\label{estIII_1}
\begin{split}
\left|\int (PU_k^n)^2\phi_i^n P\psi_{h,n}^l\right|\leq& \left|\int(PU_k^n)^2\phi_i^n\psi_{h,n}^l\right|+\left|\int(PU_k^n)^2\phi_i^n(P\psi_{h,n}^l-\psi_{h,n}^l)\right|\\
\leq&\|\phi_i^n\|\left|(PU_n^k)^2\psi_{h,n}^l\right|_{\frac{4}{3}}+\|\phi_i^n\|\left|(PU_k^n)^2(P\psi_{h,n}^l-\psi_{h,n}^l)\right|_{\frac{4}{3}}\,,
\end{split}
\end{equation}
where we made use of H\"older and Sobolev inequality. Then, by \cite[Lemma A.1]{PistoiaTavares} we get
\begin{equation}
\label{l=0}
\left|(PU_k^n)^2(P\psi_{h,n}^0-\psi_{h,n}^0)\right|_{\frac{4}{3}}\leq C(\delta_{h,n}+o(\delta_{h,n})\left(\int(PU_k^n)^{\frac{8}{3}}H(\cdot,\xi_{k,n})^{\frac{4}{3}}\right)^{\frac{3}{4}}\leq C'\delta_{h,n}+o(\delta_{h,n})
\end{equation}
and
\begin{equation}
\label{l=1,...,4}
\left|(PU_k^n)^2(P\psi_{h,n}^l-\psi_{h,n}^l)\right|_{\frac{4}{3}}\leq C(\delta_{h,n}^2+o(\delta_{h,n}^2))\left(\int(PU_k^n)^{\frac{8}{3}}\frac{\partial H}{\partial\xi}(\cdot,\xi_{k,n})^{\frac{4}{3}}\right)\leq C'\delta_{h,n}^2+o(\delta_{h,n}^2)
\end{equation}
for every $l=1,\,\dots,\,4$. Moreover, since direct calculations show
\[
\begin{split}
&|\psi_{\delta,\xi}^0|\leq\frac{C}{\delta}U_{\delta,\xi}\\
&|\psi_{\delta,\xi}^l|\leq\frac{C}{\delta}U_{\delta,\xi}^2|x_l-\xi_l|,\quad l=1,\,\dots,\,4\,,
\end{split}
\]
recalling that $0\leq PU_k^n\leq U_k^n$ by the maximum principle and making use of \cite[Lemma A.2--A.4]{PistoiaTavares}, we also have
\[
\begin{split}
\left|(PU_k^n)\psi_{h,n}^0\right|_{\frac{4}{3}}\leq&\frac{C}{\delta_{h,n}}\left|(PU_k^n)^2 U_h^n\right|_{\frac{4}{3}}\leq\frac{C}{\delta_{h,n}}\left|(U_k^n)^2 U_h^n\right|_{\frac{4}{3}}\\
\leq&\frac{C'}{\delta_{h,n}}\left(O(\delta_{h,n}\delta_{k,n})+O(\delta_{k,n}^2\delta_{h,n})\right)=O(\delta_{k,n})
\end{split}
\]
and, for $l=1,\,\dots,\,4$, 
\[
\begin{split}
\left|(PU_k^n)\psi_{h,n}^l\right|_{\frac{4}{3}}\leq\frac{C}{\delta_{h,n}}\left|(U_k^n)^2(U_h^n)^2\right|_{\frac{4}{3}}\leq\frac{C'}{\delta_{h,n}}\left(O(\delta_{k,n}^2\delta_{h,n})+O(\delta_{k,n}\delta_{h,n}^2)\right)=o(\delta_{k,n})\,.
\end{split}
\]
Combining the previous estimates with \eqref{estIII_1} and the fact that $\|\boldsymbol{\phi}_n\|=1$ thus leads to 
\begin{equation}
	\label{III}
	\delta_{h,n}^2 |III|\leq o(\delta_{h,n}^2)\sum_{l=0}^n |a_{h,n}^l|\,. 
\end{equation}
Similarly, by H\"older and Sobolev inequality, $\|\boldsymbol{\phi}_n\|=1$ and \cite[Lemma A.5]{PistoiaTavares}, 
\[
\left|\int PU_k^n PU_h^n \phi_j^n P\psi_{h,n}^l\right|\leq C\left|PU_k^n PU_h^n P\psi_{h,n}^l\right|_{\frac{4}{3}}=O(\delta_{k,n}\delta_{h,n})\,,
\]
for every $k,h=1,\,\dots,\,q$, $k\neq h$, $j\in I_k$ and $l=0,\,\dots,\,4$, so that
\begin{equation}
	\label{est IV}
	\delta_{h,n}^2|IV|\leq o(\delta_{h,n}^2)\sum_{l=0}^n |a_{h,n}^l|\,.
\end{equation} 
Furthermore, by H\"older inequality and recalling that $\boldsymbol{\lambda}_n\to0$ as $n\to+\infty$ and $\|\boldsymbol{\phi}_n\|=1$, we also have
\begin{equation}
	\label{est V}
	\delta_{h,n}^2|V|\leq o(\delta_{h,n}^2)\|(w_i^n)_{i\in I_h}\|\,.
\end{equation}
We are thus left to estimate the term $II$ in \eqref{norm_w1}. To this purpose, we set, for every $i,j\in I_h$,
\[
\alpha_{ii}:=3\mu_i c _i^2+\sum_{j \in I_h\atop j\neq i}\beta_{ij} c _j^2,\qquad\alpha_{ij}:=2\beta_{ij} c _i c _j\,,
\] 
so that
\[
\begin{split}
II=&\sum_{i\in I_h}\int\left(\alpha_{ii}\phi_i^n+\sum_{j \in I_h\atop j\neq i}\alpha_{ij}\phi_j^n\right)(PU_h^n)^2 w_i^n\\
=&\underbrace{\sum_{i\in I_h}\int\left(\alpha_{ii}\phi_i^n+\sum_{j \in I_h\atop j\neq i}\alpha_{ij}\phi_j^n\right)\left((PU_h^n)^2-(U_h^n)^2\right)w_i^n}_{II.1}\\+&\underbrace{\sum_{i\in I_h}\int\left(\alpha_{ii}\phi_i^n+\sum_{j \in I_h\atop j\neq i}\alpha_{ij}\phi_j^n\right)(U_h^n)^2 w_i^n}_{II.2}\,.
\end{split}
\]
As for $II.1$, we have, for every $i,j\in I_h$
\[
\left|\int \left((PU_h^n)^2-(U_h^n)^2\right)\phi_j^n w_i^n\right|\leq\left|(PU_h^n)^2-(U_h^n)^2\right|_2|\phi_j^n|_4|w_i^n|_4\leq \left|(PU_h^n)^2-(U_h^n)^2\right|_2\|w_i^n\|
\]
by H\"older and Sobolev inequality and $\|\boldsymbol{\phi}_n\|=1$. Furthermore, by \cite[Lemma A.3]{PistoiaTavares} and $0\leq PU_h^n\leq U_h^n$
\[
\left|(PU_h^n)^2-(U_h^n)^2\right|_2\leq C\left(\left|U_h^n(PU_h^n-U_h^n)\right|_2+\left|(PU_h^n-U_h^n)^2\right|_2\right)\leq C'\left|U_h^n(PU_h^n-U_h^n)\right|_2,
\]
and by \cite[Lemma A.1--A.2]{PistoiaTavares}
\[
\begin{split}
\left|U_h^n(PU_h^n-U_h^n)\right|_2=&\left(\int (U_h^n)^2(PU_h^n-U_h^n)^2\right)^{\frac{1}{2}}=\left(\int (U_h^n)^2(\delta_{h,n}AH(\cdot,\xi_{h,n})+o(\delta_{h,n}))^2\right)^{\frac{1}{2}}\\
\leq&(C\delta_{h,n}+o(\delta_{h,n}))\left(\int (U_h^n)^2\right)^\frac{1}{2}\leq C'\delta_{h,n}^2|\ln\delta_{h,n}|^{\frac{1}{2}}+o(\delta_{h,n}^2)\,,
\end{split}
\]
in turn yielding
\begin{equation}
\label{II.1}
|II.1|\leq C\sum_{i,j\in I_h}\left|\int \left((PU_h^n)^2-(U_h^n)^2\right)\phi_j^n w_i^n\right|\leq C'(\delta_{h,n}^2|\ln\delta_{h,n}|+o(\delta_{h,n}^2))\|(w_i^n)_{i\in I_h}\|\,.
\end{equation}
To estimate $II.2$, note first that
\begin{equation}
\label{prelII.2}
\begin{split}
II.2=&\sum_{i\in I_h}\int\left(\alpha_{ii}\phi_i^n+\sum_{j \in I_h\atop j\neq i}\alpha_{ij}\phi_j^n\right)(U_h^n)^2\sum_{l=0}^n a_{h,n}^l\mathfrak{e}_{i,h}P\psi_{h,n}^l\\
=&\sum_{l=0}^n a_{h,n}^l\int (U_h^n)^2P\psi_{h,n}^l\sum_{i\in I_h}\left(\alpha_{ii}\mathfrak{e}_{i,h}\phi_i^n+\mathfrak{e}_{i,h}\sum_{j \in I_h\atop j\neq i}\alpha_{ij}\phi_j^n\right)\\
=&\sum_{l=0}^n a_{h,n}^l\int (U_h^n)^2P\psi_{h,n}^l\sum_{i\in I_h}\phi_i^n\left(\alpha_{ii}\mathfrak{e}_{i,h}+\sum_{j \in I_h\atop j\neq i}\alpha_{ij}\mathfrak{e}_{j,h}\right)\\
=&3\sum_{l=0}^n a_{h,n}^l \int (U_h^n)^2P\psi_{h,n}^l\sum_{i\in I_h}\phi_i^n\mathfrak{e}_{i,h}\,,
\end{split}
\end{equation}
since by construction $\mathfrak{e}_h$ is an eigenvector of the matrix $(\alpha_{ij})_{i,j\in I_h}$ corresponding to the eigenvalue $\Lambda_1=3$ (see \cite[Lemma 6.1]{PistoiaSoave}). 
Recalling that $(\phi_i^n)_{i\in I_h}\in (K_h^n)^\perp$, so that by \eqref{Ppsi} and \eqref{def_K}
\[
0=\sum_{i\in I_h}\int \mathfrak{e}_{i,h}\nabla P\psi_{h,n}^l\cdot\nabla\phi_i^n=3\sum_{i\in I_h}\int (U_h^n)^2\mathfrak{e}_{i,h}\psi_{h,n}^l\phi_i^n
\]
for every $l=0,\,\dots,\,4$, we can then rewrite \eqref{prelII.2} as
\[
II.2=3\sum_{l=0}^n\sum_{i\in I_h} a_{h,n}^l \mathfrak{e}_{i,h} \int (U_h^n)^2(P\psi_{h,n}^l-\psi_{h,n}^l)\phi_i^n\,.
\]
Arguing as in \eqref{estIII_1}--\eqref{l=0}--\eqref{l=1,...,4} above we get
\[
\left|\int (U_h^n)^2(P\psi_{h,n}^l-\psi_{h,n}^l)\phi_i^n\right|\leq C\delta_{h,n}+o(\delta_{h,n})
\]
for every $l=0,\dots,4$, $h=1,\dots,q$ and $i\in I_h$, thus implying
\begin{equation}
	\label{II2}
	|II.2|\leq (C\delta_{h,n}+o(\delta_{h,n}))\sum_{l=0}^n|a_{h,n}^l|\,.
\end{equation}
Coupling \eqref{II.1}--\eqref{II2} then gives
\[
|II|\leq C'(\delta_{h,n}^2|\ln\delta_{h,n}|+o(\delta_{h,n}^2))\|{\bf w}_n\|+(C\delta_{h,n}+o(\delta_{h,n}))\sum_{l=0}^n |a_{h,n}^l|\,,
\]
and combining with \eqref{norm_w1},\eqref{III},\eqref{est IV},\eqref{est V} we finally obtain
\[
\delta_{h,n}^2\|(w_i^n)\|_{i\in I_h}^2\leq o(\delta_{h,n}^2)\|(w_i^n)\|_{i\in I_h}+o(\delta_{h,n}^2)\sum_{l=0}^n|a_{h,n}^l|\,.
\]
Together with \eqref{I}, this ensures that $\|(w_i^n)\|_{i\in I_h}\|\to0$ as $n\to+\infty$, and repeating the argument for every $h=1,\dots,q$, gives $\|{\bf w}_n\|\to0$ as desired.

\smallskip
{\em Step 2.} For every $h=1,\dots,q$ and $i\in I_h$, we set
\[
\widetilde{\phi}_i^n(y):=\begin{cases}
\delta_{h,n}\phi_i^n\left(\xi_{h,n}+\delta_{h,n}y\right) & \text{if }y\in\widetilde{\Omega}_{h,n}:=\frac{\Omega-\xi_{h,n}}{\delta_{h,n}}\\
0 & \text{if }y\in\mathbb{R}^n\setminus\widetilde{\Omega}_{h,n}\,.
\end{cases}
\]
By definition, $\|\widetilde{\phi}_i^n\|_{H_0^1(\mathbb{R}^n)}=\|\phi_i^n\|_{H_0^1(\Omega)}$, so that $\widetilde{\phi}_i^n\rightharpoonup\widetilde{\phi}_i$ in $\mathcal{D}^{1,2}(\mathbb{R}^n)$ as $n\to+\infty$, for some $\widetilde{\phi}_i$.
Let us thus show that $\widetilde{\phi}_i\equiv0$ for every $i=1,\dots, m$. To this aim, note first that \eqref{phi_in_perp} can be rewritten as
\[
\begin{split}
&\int_{\widetilde{\Omega}_{h,n}}\nabla\widetilde{\phi}_i^n\cdot\nabla\varphi\\
=&\underbrace{\delta_{h,n}^2\int_{\widetilde{\Omega}_{h,n}}\left[\left(3\mu_i  c _i^2+\sum_{j\in I_h\atop j\neq i}\beta_{ij} c _j^2\right)\widetilde{\phi}_i^n+2\sum_{j \in I_h\atop j\neq i}\beta_{ij} c _i c _j\widetilde{\phi}_j^n\right]\left(PU_h^n\right)^2(\xi_{h,n}+\delta_{h,n}y)\varphi}_{A_n}\\
+&\underbrace{\delta_{h,n}^2\int_{\widetilde{\Omega}_{h,n}}\sum_{k\neq h}\sum_{j \in I_k}\beta_{ij} c _j^2(PU_k^n)^2(\xi_{h,n}+\delta_{h,n}y)\widetilde{\phi}_i^n\varphi}_{B_n}\\
+&\underbrace{2\delta_{h,n}^3\int_{\widetilde{\Omega}_{h,n}}\sum_{k\neq h}\sum_{j \in I_k} c _i c _jPU_k^n(\xi_{h,n}+\delta_{h,n}y) PU_h^n(\xi_{h,n}+\delta_{h,n}y)\phi_j^n(\xi_{h,n}+\delta_{h,n}y)\varphi}_{C_n}\\
+&\delta_{h,n}^2\int_{\widetilde{\Omega}_{h,n}}\lambda_{i,n}\widetilde{\phi}_i^n\varphi+\int_{\widetilde{\Omega}_{h,n}}\nabla(\widetilde{h}_i^n-\widetilde{w}_i^n)\cdot\nabla\varphi,
\end{split}
\]
for every $\varphi\in C_C^\infty(\mathbb{R}^n)$, where 
\[
\begin{split}
\widetilde{h}_i^n(y):=&\begin{cases}
\delta_{h,n}h_i^n(\xi_{h,n}+\delta_{h,n}y) & \text{if }y\in\widetilde{\Omega}_{h,n}\\
0 & \text{if }y\in\mathbb{R}^n\setminus\widetilde{\Omega}_{h,n}
\end{cases}\\
\widetilde{w}_i^n(y):=&\begin{cases}
\delta_{h,n}w_i^n(\xi_{h,n}+\delta_{h,n}y) & \text{if }y\in\widetilde{\Omega}_{h,n}\\
0 & \text{if }y\in\mathbb{R}^n\setminus\widetilde{\Omega}_{h,n}\,.
\end{cases}
\end{split}
\]
Let now $\varphi$ be so that $K_\varphi:=\text{supp}\varphi\subset\widetilde{\Omega}_{h,n}$, which is always true for any given $\varphi\in C_c^\infty(\mathbb{R}^n)$ and $n$ large enough. On the one hand, it is readily seen that
\[
\begin{split}
&\delta_{h,n}^2\int_{\widetilde{\Omega}_{h,n}}\lambda_{i,n}\widetilde{\phi}_i^n\varphi\to0\\
&\int_{\widetilde{\Omega}_{h,n}}\nabla(\widetilde{h}_i^n-\widetilde{w}_i^n)\cdot\nabla\varphi\to0
\end{split}
\]
as $n\to+\infty$, since $\boldsymbol{\lambda}_n\to0$, $\|{\bf h}_n\|\to0$ and $\|{\bf w}_n\|\to0$.

On the other hand, for every $i\in I_h$, $j\in I_k$, $h\neq k$
\[
\begin{split}
\int_{\widetilde{\Omega}_{h,n}}(PU_k^n)^2(\xi_{h,n}+&\delta_{h,n}y)\widetilde{\phi}_i^n(y)\varphi(y)=\int_{\widetilde{\Omega}_{h,n}}(U_k^n)^2(\xi_{h,n}+\delta_{h,n}y)\widetilde{\phi}_i^n(y)\varphi(y)+o(1)\\
=&\delta_{k,n}^2\int_{\widetilde{\Omega}_{h,n}\cap K_\varphi}\frac{\alpha_4^2}{(\delta_{k,n}^2+|\delta_{h,n}y+\xi_{h,n}-\xi_{k,n}|^2)^2}\widetilde{\phi}_i^n(y)\varphi(y)+o(1)\to0
\end{split}
\]
and 
\[
\begin{split}
\delta_{h,n}^3\int_{\widetilde{\Omega}_{h,n}}&PU_k^n(\xi_{h,n}+\delta_{h,n}y) PU_h^n(\xi_{h,n}+\delta_{h,n}y)\phi_j^n(\xi_{h,n}+\delta_{h,n}y)\varphi(y)\\
=&\delta_{h,n}^2\int_{\widetilde{\Omega}_{h,n}\cap K_\varphi}\frac{\varphi(y)}{1+|y|^2}\frac{\alpha_4\delta_{k,n}}{\delta_{k,n}^2+|\delta_{h,n}y+\xi_{h,n}-\xi_{k,n}|^2}\phi_j^n(\xi_{h,n}+\delta_{h,n}y)+o(1)\to0
\end{split}
\]
as $y\in K_\varphi$, which is fixed and bounded, and $|\xi_{h,n}-\xi_{k,n}|\geq\eta$ by assumption. Hence, 
\[
B_n\to0,\quad C_n\to0\quad\text{as }n\to+\infty\,.
\]
Furthermore, for every $j\in I_h$, 
\[
\begin{split}
\delta_{h,n}^2\int_{\widetilde{\Omega}_{h,n}}\left(PU_h^n\right)^2(\xi_{h,n}+&\delta_{h,n}y)\widetilde{\phi}_j^n(y)\varphi(y)\\=&\delta_{h,n}^2\int_{\widetilde{\Omega}_{h,n}}\left(U_h^n\right)^2(\xi_{h,n}+\delta_{h,n}y)\widetilde{\phi}_j^n(y)\varphi(y)+o(1)\\
=&\int_{\widetilde{\Omega}_{h,n}\cap K_\varphi}\frac{\alpha_4^2}{(1+|y|^2)^2}\widetilde{\phi}_j^n(y)\varphi(y)+o(1)\to\int_{\mathbb{R}^n}\frac{\alpha_4^2}{(1+|y|^2)^2}\widetilde{\phi}_j(y)\varphi(y)\\=&\int_{\mathbb{R}^n}U_{1,0}^2(y)\widetilde{\phi}_j(y)\varphi(y)
\end{split}
\] 
since $\widetilde{\phi}_j^n\rightharpoonup\widetilde{\phi}_j$ in $\mathcal{D}^{1,2}(\mathbb{R}^n)$ and $U_{1,0}\in L^n(\mathbb{R}^n)$. Therefore, 
\[
A_n\to\int_{\mathbb{R}^n}\left[\left(3\mu_i  c _i^2+\sum_{j\in I_h\atop j\neq i}\beta_{ij} c _j^2\right)\widetilde{\phi}_i+2\sum_{j \in I_h\atop j\neq i}\beta_{ij} c _i c _j\widetilde{\phi}_j\right]\left(U_{1,0}\right)^2\varphi\qquad\text{as }n\to+\infty
\]
that is, for every $i\in I_h$
\[
-\Delta\widetilde{\phi}_i=\left[\left(3\mu_i  c _i^2+\sum_{j\in I_h\atop j\neq i}\beta_{ij} c _j^2\right)\widetilde{\phi}_i+2\sum_{j \in I_h\atop j\neq i}\beta_{ij} c _i c _j\widetilde{\phi}_j\right]\left(U_{1,0}\right)^2\quad\text{in }\mathbb{R}^n,\quad\widetilde{\phi}_i\in\mathcal{D}^{1,2}(\mathbb{R}^n).
\]
Therefore, the weak limit $(\widetilde{\phi}_i)_{i\in I_h}$ solves the linearized system \eqref{sist3intro} for every $h=1,\dots,q$. Thus, $(\widetilde{\phi}_i)_{i\in I_h}\in\text{span}\{\mathfrak{e}_h\psi_{1,0}^l\,:\,l=0,\dots,4\}$. However, since $(\phi_i^n)_{i\in I_h}\in(K_h^n)^\perp$ for every $n$, then it follows
\[
\begin{split}
0=&\delta_{h,n}\langle(\phi_i^n)_{i\in I_h},\mathfrak{e}_hP\psi_{h,n}^0\rangle=3\sum_{i\in I_h}\delta_{h,n}\int_{\Omega} (U_h^n)^2\mathfrak{e}_{i,h}\psi_{h,n}^0\phi_i^n\\
=&3\sum_{i\in I_h}\int_{\widetilde{\Omega}_{h,n}}\mathfrak{e}_{i,h}\alpha_4^3\frac{|y|^2-1}{(1+|y|^2)^n}\widetilde{\phi}_{i}^n=3\sum_{i\in I_h}\int_{\widetilde{\Omega}_{h,n}}U_{1,0}^2\psi_{1,0}^0\mathfrak{e}_{i,h}\widetilde{\phi}_i^n
\end{split}
\]
and, for every $l=1,\dots,4$,
\[
\begin{split}
0=&\delta_{h,n}\langle(\phi_i^n)_{i\in I_h},\mathfrak{e}_hP\psi_{h,n}^l\rangle=3\sum_{i\in I_h}\delta_{h,n}\int_{\Omega} (U_h^n)^2\mathfrak{e}_{i,h}\psi_{h,n}^l\phi_i^n\\
=&3\sum_{i\in I_h}\int_{\widetilde{\Omega}_{h,n}}\mathfrak{e}_{i,h}2\alpha_4^3\frac{y_l}{(1+|y|^2)^n}\widetilde{\phi}_i^n=3\sum_{i\in I_h}\int_{\widetilde{\Omega}_{h,n}}U_{1,0}^2\psi_{1,0}^l\mathfrak{e}_{i,h}\widetilde{\phi}_i^n.
\end{split}
\]
Passing to the limit as $n\to+\infty$ and making use of $\widetilde{\phi}_i^n\rightharpoonup\widetilde{\phi}_i$, we obtain
\[
3\sum_{i\in I_h}\int_{\mathbb{R}^n}U_{1,0}^2\psi_{1,0}^l\mathfrak{e}_{i,h}\widetilde{\phi}_i=0,\qquad l=0,\dots,4\,.
\]
This shows that $(\widetilde{\phi}_i)_{i\in I_h}\in(\text{span}\{\mathfrak{e}_h\psi_{1,0}^l\,:\,l=0,\dots,4\})^\perp$, thus implying $\widetilde{\phi}_i\equiv0$ for every $i\in I_h$ and concluding Step 2.

\smallskip
{\em Step 3. } We now prove that $\phi_{i}^n\to0$ strongly in $H_0^1(\Omega)$ for every $i=1,\dots,m$, which in turn concludes the proof of \eqref{17ott1} as it contradicts the assumption $\|\boldsymbol{\phi}_n\|=1$ for every $n$.

To this aim, let us test \eqref{phi_in_perp} with $\phi_{i}^n$, so to have
\begin{equation}
\label{Step3}
\begin{split}
\|\phi_i^n\|^2=&\underbrace{\left(3\mu_i  c _i^2+\sum_{j\in I_h\atop j\neq i}\beta_{ij} c _j^2\right)\int_{\Omega}\left(PU_h^n\right)^2(\phi_i^n)^2}_{I}+2\underbrace{\sum_{j \in I_h\atop j\neq i}\beta_{ij} c _i c _j\int_{\Omega}\left(PU_h^n\right)^2\phi_j^n\phi_i^n}_{II}\\
+&\underbrace{\sum_{k\neq h}\sum_{j \in I_k}\beta_{ij} c _j^2\int_{\Omega}(PU_k^n)^2(\phi_i^n)^2}_{III}+\underbrace{\sum_{k\neq h}\sum_{j  \in I_k }2\beta_{ij} c _i c _j\int_{\Omega}PU_k^n PU_h^n\phi_j^n\phi_i^n}_{IV}\\
+&\lambda_{i,n}\|\phi_i^n\|^2+\langle h_i^n-w_i^n,\phi_i^n\rangle\,.
\end{split}
\end{equation}
Since $\boldsymbol{\lambda}_n\to0$, $\|{\bf h}_n\|\to0$, $\|{\bf w}_n\|\to0$ and $\boldsymbol{\phi}_n$ is bounded in $H_0^1(\Omega)$ uniformly on $n$,
\begin{equation}
\label{Step3_o(1)}
\lambda_{i,n}\|\phi_i^n\|^2+\langle h_i^n-w_i^n,\phi_i^n\rangle\to0\qquad\text{as }n\to+\infty\,.
\end{equation}
Moreover, recalling that $0\leq PU_h^n\leq U_h^n$ for every $h=1,\dots,q$, we have
\[
\begin{split}
\int_{\Omega}\left(PU_h^n\right)^2(\phi_i^n)^2\leq& \int_{\Omega}\left(U_h^n\right)^2(\phi_i^n)^2=\int_{\widetilde{\Omega}_{h,n}}U_{1,0}^2(\widetilde{\phi}_i^n)^2\to0\\
\int_{\Omega}\left(PU_h^n\right)^2\phi_j^n\phi_i^n=&\int_{\Omega}\left(U_h^n\right)^2\phi_j^n\phi_i^n+o(1)=\int_{\widetilde{\Omega}_{h,n}}(U_{1,0})^2\widetilde{\phi}_j^n\widetilde{\phi}_i^n+o(1)\to0
\end{split}
\]
as $n\to+\infty$ and for every $i,j\in I_h$, since $\widetilde{\phi}_i^n,\widetilde{\phi}_j^n\rightharpoonup 0$ in $\mathcal{D}^{1,2}(\mathbb{R}^n)$ and $U_{1,0}^2\in L^2(\mathbb{R}^n)$, so that
\begin{equation}
	\label{Step3_I II}
	|I|\to0\quad\text{and}\quad|II|\to0\quad\text{as }n\to+\infty\,.
\end{equation}
As for term $IV$, for every $i\in I_h$, $j\in I_k$, $h\neq k$, by H\"older and Sobolev inequalities and by \cite[Lemma A.2--A.4]{PistoiaTavares}
\[
\begin{split}
\left|\int_{\Omega}PU_k^n PU_h^n\phi_j^n\phi_i^n\right|\leq&\left(\int_{\Omega}(PU_k^n)^2(PU_h^n)^2\right)^\frac{1}{2}\left(\int_{\Omega}(\phi_j^n)^2(\phi_i^n)^2\right)^\frac{1}{2}\leq C\left(\int_{\Omega}(U_k^n)^2(U_h^n)^2\right)^\frac{1}{2}\\
\leq&C'\left(O(\delta_{h,n}^2)\int_\Omega(U_k^n)^2+O(\delta_{k,n}^2)\int_\Omega(U_k^n)^2+O(\delta_{h,n}^2\delta_{k,n}^2)\right)^\frac{1}{2}\\
\leq&C''\left(O(\delta_{h,n})\delta_{k,n}\sqrt{|\ln\delta_{k,n}|}+O(\delta_{k,n})\delta_{h,n}\sqrt{|\ln\delta_{h,n}|}+O(\delta_{h,n}\delta_{k,n})\right)\,,
\end{split}
\]
thus ensuring
\begin{equation}
	\label{Step3_IV}
	|IV|\to0\qquad\text{as }n\to+\infty\,.
\end{equation}
We are left to discuss term III. On the one hand, if for every $h=1,\dots,q$ it holds 
\[
\max\limits_{(i,j)\in I_h\times I_k\atop h\not=k}\beta_{ij}\leq0\,,
\]
then we simply have
\[ III\le0.
\]
On the other hand, if there exist $i\in I_h,j\in I_k$ with $\beta_{ij}>0$, then 
\[
\beta_{ij}\int_{\Omega}(PU_k^n)^2(\phi_{i}^n)^2\leq\beta_{ij}\left|\phi_{i}^n\right|_4^2\left|U_k^n\right|_4^2\leq C\beta_{ij}\|\phi_{i}^n\|^2\,.
\]
Let then $\overline{\beta}>0$ be a positive constant so that, whenever 
\[
\max\limits_{(i,j)\in I_h\times I_k\atop h\not=k}\beta_{ij}\leq \overline{\beta}\,,
\] 
we have
\begin{equation}
\label{Step3_III}
|III|\leq C\sum_{k\neq h}\sum_{j  \in I_k }\beta_{ij} c _j^2\|\phi_{i}^n\|^2\leq\frac{1}{2}\|\phi_i^n\|^2\,.
\end{equation}
Summing up, coupling \eqref{Step3_o(1)},\eqref{Step3_I II},\eqref{Step3_IV} and \eqref{Step3_III} with \eqref{Step3}, we conclude that $\|\phi_{i}^n\|\to0$ as $n\to+\infty$, for every $i=1,\dots,m$.

\smallskip
{\em Step 4: invertibility.} Note first that $(-\Delta)^{-1}:L^{\frac{4}{3}}(\Omega)\to H_0^1(\Omega)$ is a compact operator, so that $\bs{\mathcal L} $ restricted to $\bf K ^\perp$ is a compact perturbation of the identity. Furthermore, \eqref{17ott1} implies that $\bs{\mathcal L} $ is injective, and thus surjective by Fredholm alternative. Henceforth, it is invertible, and the continuity of the inverse operator is guaranteed by \eqref{17ott1}.

\bibliography{system}
\bibliographystyle{plain}

\end{document}